\theoremstyle{plain}
\newtheorem{theorem}{Theorem}[section]
\newtheorem{corollary}[theorem]{Corollary}
\theoremstyle{definition}
\newtheorem{definition}[theorem]{Definition}
\newtheorem{remark}[theorem]{Remark}
\newtheorem{example}[theorem]{Example}
\newtheorem{proposition}[theorem]{Proposition}
\newcommand{\Q}{\mathbb{Q}}
\newcommand{\scS}{\mathcal{S}}
\newcommand{\Z}{\mathbb{Z}}
\newcommand{\MH}{\operatorname{MH}} 
\newcommand{\MC}{\operatorname{MC}} 
\newcommand{\rank}{\operatorname{rank}}
\newcommand{\Ker}{\operatorname{Ker}}
\renewcommand{\Im}{\operatorname{Im}}
\newcommand{\id}{\operatorname{id}}
\title{Magnitude homology of graphs and discrete Morse theory on 
Asao-Izumihara complexes} 
\author{Yu Tajima\thanks{Department of Mathematics, Graduate School of Science, 
Hokkaido University, North 10, West 8, Kita-ku, 
Sapporo 060-0810, JAPAN 
E-mail: tajima@math.sci.hokudai.ac.jp}, 
Masahiko Yoshinaga\thanks{Department of Mathematics, Graduate School of Science, Osaka University, Toyonaka 560-0043, JAPAN 
E-mail: yoshinaga@math.sci.osaka-u.ac.jp}}
\date{\today}
\begin{document}
\maketitle

\begin{abstract} 
Recently, Asao and Izumihara introduced CW-complexes whose 
homology groups are isomorphic to direct summands of the graph magnitude 
homology group. 
In this paper, we study the homotopy type of the CW-complexes 
in connection with the diagonality of magnitude homology groups. 
We prove that the Asao-Izumihara complex is homotopy equivalent to 
a wedge of spheres for pawful graphs introduced by Y. Gu. 
The result can be considered as a homotopy type version of Gu's result. 
We also formulate 
a slight generalization of the notion of pawful graphs and find new non-pawful 
diagonal graphs of diameter $2$. 
\end{abstract}


\section{Introduction}
\label{sec:intro}

Magnitude is an invariant for metric spaces introduced by Leinster \cite{Lei-met}, 
which measures the number of efficient points. 
As a categorification of the magnitude, Hepworth and Willerton 
defined \emph{magnitude homology} for graphs \cite{HW}, and later 
Leinster and Shulman generalized to metric spaces (furthermore, for enriched 
categories) \cite{lei-shu}. 
Several techniques 
to compute the magnitude homology groups have been developed 
\cite{HW, bot-kai, KY, gomi, G, saz-sum}. 

Let $G$ be a graph. Let $k, \ell\geq 0$. 
Denote the magnitude homology of degree $k$ with length $\ell$ by 
$\MH_k^\ell(G)$ (see \S \ref{sec:pre} for details). 
The graph $G$ is called diagonal if the magnitude homology vanishes when 
$k\neq \ell$ (Definition \ref{def:diagonal}). 
The diagonality of a graph is an important feature, because 
the rank of magnitude homology is determined solely by the magnitude of $G$ if 
$G$ is diagonal. 
There are several recent studies on the diagonality \cite{ahk, bot-kai, G, HW, saz-sum}. 
Among others, let us recall two results concerning diagonality. 
The first one is a result by Asao, Hiraoka and Kanazawa \cite[Theorem 1.5]{ahk}, 
which asserts that if $G$ is diagonal and is not a tree, 
then every edge is contained in a cycle of length at most $4$. 
This gives a necessary condition for a graph to be diagonal. 
The second one is concerning a class of diagonal graphs with diameter $2$. 
Gu \cite{G} introduced the notion of \emph{pawful graphs} 
(see Definition \ref{def:pawful}) and proved that pawful graphs are diagonal. 
Thus we have the following inclusions. 
\[
\{\mbox{Pawful graphs}\}\subseteq\{\mbox{Diagonal graphs}\}
\subseteq
\{\mbox{Graphs of Asao-Hiraoka-Kanazawa}\}. 
\]
Pawful graphs are graphs with diameter at most $2$ satisfying some conditions. 
Many previously known diagonal graphs of diameter $2$ 
are pawful. 
However, even for graphs of diameter $2$, both of inclusions above are proper. 
We will exhibit non-pawful diagonal graphs (Example \ref{ex:01} and Example \ref{ex:02}) 
and a 
non-diagonal graph with diameter $2$ such that each edge is 
contained in a cycle of length at most $4$ (Example \ref{ex:03}). 

On the other hand, recently, Asao and Izumihara 
\cite{AI} constructed a pair of simplicial complexes 
$K_\ell(a, b)\supset K'_\ell(a, b)$ for two vertices $a, b$ of a graph $G$ 
and $\ell\geq 3$ such that 
the homology of the quotient $K_\ell(a, b)/K'_\ell(a, b)$ is isomorphic to a 
direct summand of $\MH_*^\ell(G)$ up to degree shift. 
It is straightforward that a graph $G$ is diagonal if and only if 
the homology of the Asao-Izumihara complex $K_\ell(a, b)/K'_\ell(a, b)$ 
concentrates at the top degree (Proposition \ref{prop:diagonal}). 
The purpose of this paper is to study the topology of 
Asao-Izumihara complexes in connection with the diagonality of $G$. 
In particular, we prove that the Asao-Izumihara complex of 
a pawful graph is homotopy equivalent to the wedge of spheres. 
Our result may be considered 
as a homotopy-type version of Gu's result. 
The proof uses discrete Morse theory. 




This paper is organized as follows. In \S\ref{sec:pre}, first we recall the definition of magnitude and magnitude homology of graphs, and introduce Asao-Izumihara complex following \cite{AI}. Next, we recall the definition of an acyclic matching and several results in discrete Morse theory following \cite{K}. In \S\ref{sec:main}, We describe the homotopy type of the Asao-Izumihara complex corresponding to pawful graphs. In \S\ref{sec:proofs}, we give the proof of the main result using discrete Morse theory. In \S\ref{sec:general}, we introduce a class of diagonal graphs which properly contains pawful graphs.

\section{Background}
\label{sec:pre}
\subsection{Magnitude and magnitude homology of graphs}

Let $G=(V(G), E(G))$ be a graph, 
where $V(G)$ is the set of vertices and $E(G)$ is the set of edges. 
In this paper, we assume that a graph is finite, simple, and connected. 
We call a sequence of consecutive edges 
$\{v_0, v_1\}, \{v_1, v_2\}, \cdots, \{v_{k-1}, v_k\}\in E(G)$ $(v_0, v_1, \cdots, v_k\in V(G))$ a path of length $k$. 
The distance function 
$d_G\colon V(G)\times V(G)\rightarrow \mathbb{Z}_{\geq 0}$ 
is defined as the minimum length of paths connecting two vertices. 

\begin{definition}[\cite{L}, Definition 2.1]
Let $G$ be a graph. Let $q$ be a variable. 
Define the $|V(G)|\times |V(G)|$ matrix $Z_G$ with entries in 
$\Q(q)$ by  $Z_G:=\Bigl(q^{d(x, y)}\Bigr)_{x, y\in V(G)}.$ 
Note that $Z_G$ is invertible, and let us denote the 
entry of the inverse matrix $Z_G^{-1}$ by $Z_G^{-1}(x, y)$. 
The magnitude of $G$, denoted by $\#G$, is defined as follows:
\[
\#G:=\sum_{x, y\in V(G)} Z_G^{-1}(x, y)\in \Q(q).
\]
\end{definition}

Let $G$ be a graph. We say that $(x_0, x_1, \cdots, x_k)\in V(G)^{k+1}$ 
is a sequence if $x_i\neq x_{i+1}$ for any $i\in\{0, 1, \cdots, k-1\}$. 
Let $x=(x_0, x_1, \cdots, x_k)\in V(G)^{k+1}$ be a sequence, we say that 
$y=(y_0, y_1, \cdots, y_{k'})\in V(G)^{k'+1}$ is a subsequence of $x$ 
(denote $y\prec x$) if there exist indices $0=i_0<i_1<\cdots <i_{k'}=k$ 
such that $y_j=x_{i_j}$ for any $j\in\{0, 1, \cdots, k'\}$.
Length of $x$ is defined as $L(x):=d(x_0, x_1)+d(x_1, x_2)+\cdots+d(x_{k-1}, x_k).$ 
We call $x_i$ a smooth point if $L(x)=L((x_0, \cdots, \widehat{x}_i, \cdots, x_k))$.

\begin{definition}[Magnitude homology of graphs]
\label{def:MC}
Fix $\ell\geq 0$.
Define the abelian group $\MC_k^\ell(G)$ and the map $\partial$ as follows.
\[
\begin{split}
&\MC_k^\ell(G):=\bigoplus_{\,x\in\{x=(x_0, \cdots, x_k)\in V(G)^{k+1}\colon\text{sequence }\mid L(x)=\ell\,\}} \mathbb{Z}\langle x\rangle, \\
&\partial\colon \MC_k^\ell(G)\rightarrow \MC_{k-1}^\ell(G),\ \partial:=\sum_{i=1}^{k-1}(-1)^i\partial_i, \\
&\partial_i(x_0, \cdots, x_k):=\begin{cases}(x_0, \cdots, \widehat{x_i}, \cdots, x_k)&
(\text{if } x_i\text{ is a smooth point of }x), \\0&(\text{otherwise}).\end{cases}\\
\end{split}
\]
Then $(\MC_*^\ell(G), \partial)$ is a chain complex and it is called the magnitude chain complex. The magnitude homology of $G$ is defined as the homology of the chain 
complex: $\MH_k^\ell(G):=H_k(\MC_*^\ell(G))$.
\end{definition}

\begin{definition}
\label{def:diagonal}
A graph $G$ is called \emph{diagonal} if $\MH_k^\ell(G)=0$ for $k\neq \ell$. 
\end{definition}

For example, trees, complete graphs and  the join of non-empty graphs are known as diagonal graphs (\cite{HW}).

\begin{theorem}[\cite{HW}, Theorem 8.]
Let $G$ be a graph. Then $\#G$ has the following $($Taylor$)$ expansion.
\[
\#G=\sum_{\ell\geq 0} \biggl(\sum_{k\geq0}(-1)^k \rank \MH_k^\ell(G)\biggr)q^\ell.
\]
In particular, the magnitude homology of $G$ determines $\#G$. 
Furthermore, if $G$ is diagonal, 
$\#G\in\Q(q)$ determines $\rank\MH_k^\ell(G)$. 
\end{theorem}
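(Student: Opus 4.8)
The plan is to expand the inverse matrix $Z_G^{-1}$ as a power series in $q$ and to recognise the coefficient of $q^\ell$ as the Euler characteristic of the magnitude chain complex $\MC_*^\ell(G)$. First I would write $Z_G = I + M$, where $M$ is the matrix with $M(x,y) = q^{d(x,y)}$ for $x\neq y$ and $M(x,x) = 0$. Since $d(x,y)\geq 1$ whenever $x\neq y$, every entry of $M$ lies in $q\,\Q[[q]]$; in particular $\det Z_G \equiv 1 \pmod q$, so $\det Z_G$ is a unit of $\Q[[q]]$, hence $Z_G^{-1}$ has all entries in $\Q[[q]]$, and the Neumann series $\sum_{n\geq 0}(-1)^n M^n$ converges $q$-adically and, being a two-sided inverse of $Z_G$, agrees entrywise with $Z_G^{-1}$.

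Next I would compute $M^n$ combinatorially. By the definition of matrix multiplication,
\[
M^n(x,y) = \sum q^{\,d(x_0,x_1)+d(x_1,x_2)+\cdots+d(x_{n-1},x_n)},
\]
the sum being over all tuples $(x_0,\dots,x_n)$ with $x_0=x$, $x_n=y$ and $x_i\neq x_{i+1}$ for all $i$, that is, over all sequences from $x$ to $y$ of length $n$; the exponent is exactly $L(x_0,\dots,x_n)$. Summing over all $x,y\in V(G)$ and over $n$, and using that each step of a sequence contributes at least $1$ to $L$, so that a sequence with $L(x)=\ell$ has at most $\ell$ steps, I would reorganise the sum by length:
\[
\#G = \sum_{n\geq 0}(-1)^n\!\!\sum_{\substack{x\in V(G)^{n+1}\\ \text{sequence}}}\! q^{L(x)}
= \sum_{\ell\geq 0}\Biggl(\sum_{n=0}^{\ell}(-1)^n\rank\MC_n^\ell(G)\Biggr)q^\ell,
\]
the inner sums being finite since $V(G)$ is finite.

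Finally I would invoke the standard fact that for a bounded complex of finitely generated free abelian groups the alternating sum of the ranks of the chain groups equals the alternating sum of the ranks of the homology groups (rank is additive on $0\to Z_n\to \MC_n^\ell(G)\to B_{n-1}\to 0$ and $0\to B_n\to Z_n\to \MH_n^\ell(G)\to 0$). Since $\MC_*^\ell(G)$ is such a complex, this replaces $\rank\MC_n^\ell(G)$ by $\rank\MH_n^\ell(G)$ and gives the asserted Taylor expansion; the claim that the magnitude homology determines $\#G$ is then immediate. For the last assertion, if $G$ is diagonal then $\MH_k^\ell(G)=0$ unless $k=\ell$, so the coefficient of $q^\ell$ in $\#G$ equals $(-1)^\ell\rank\MH_\ell^\ell(G)$; hence $\rank\MH_\ell^\ell(G) = (-1)^\ell[q^\ell]\#G$ is recovered from $\#G$ while all other ranks vanish. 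I expect the only genuinely delicate point to be the justification of the power series manipulation — that the $q$-adic Neumann series really equals the rational function $Z_G^{-1}$ (where $\det Z_G\equiv 1\pmod q$ is used) and that the reindexing by length is legitimate because only finitely many sequences contribute to each power of $q$; the passage from chain-level to homology-level Euler characteristics and the deduction of the diagonal case are routine.
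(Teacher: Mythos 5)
Your proposal is correct; note that the paper itself gives no proof of this statement, quoting it as \cite{HW}, Theorem 8. Your argument --- the $q$-adic Neumann series $Z_G^{-1}=\sum_n(-1)^nM^n$, the identification of the entries of $M^n$ with generating functions of sequences so that the coefficient of $q^\ell$ is $\sum_n(-1)^n\rank\MC_n^\ell(G)$, and the passage from the chain-level to the homology-level Euler characteristic (legitimate since each $\MC_*^\ell(G)$ is a bounded complex of finitely generated free abelian groups) --- is exactly the standard proof of Leinster and Hepworth--Willerton, and the deduction for diagonal graphs is handled correctly.
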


\subsection{Asao-Izumihara complex}

Asao-Izumihara complex is a CW complex which is obtained as the quotient of 
a simplicial complex $K_\ell(a, b)$ divided by a subcomplex $K'_\ell(a, b)$. 

Let us start with recalling notations on 
simplicial complexes and chain complexes.

Let $V$ be a set and $P(V)$ be the power set of $V$. Recall that $S\subset P(V)\setminus\{\emptyset\}$ is called a (abstract) simplicial complex if it satisfies the following condition:
\begin{itemize}
\item[]{For any $A\in S$ and any subset $B\subseteq A$, $B\neq\emptyset\quad\Rightarrow\quad B\in S$ holds.}
\end{itemize}

Let $X$ be a totally ordered set, and $S\subset P(X)\setminus\{\emptyset\}$ be a simplicial complex.
We denote the cellular chain complex by $C_k(S)$ and the boundary map by $\partial_k$. Namely, 
\[
\begin{split}
&C_k(S):=\bigoplus_{\substack{\{s_0, \cdots, s_k\}\in S, \\s_0<\cdots <s_k}}\mathbb{Z}\langle s_0, \cdots, s_k\rangle, \\
&\partial_k\colon C_k(S)\rightarrow C_{k-1}(S), \ \langle s_0, \cdots, s_k\rangle\mapsto\sum_{i=0}^{k}(-1)^i\langle s_0, \cdots, \widehat{s}_i, \cdots, s_k\rangle.\\
\end{split}
\]

Let $G$ be a graph and $k\geq 1$.
A sequence $(x_0, \cdots, x_k)\in V(G)^{k+1}$ is called a path in $G$ if $d(x_i, x_{i+1})=1$ for any $i\in\{0, \cdots, k-1\}$.
For $a, b\in V(G)$, the set of paths with length $\ell$ which start with $a$ and end with $b$ is denoted by
\[
P_{\ell}(a, b):=\{\, (x_0, \cdots, x_{\ell})=x\colon\text{path in }G\mid x_0=a,\ x_\ell=b,\ L(x)=\ell\,\}.
\]

\begin{definition}[$K_{\ell}(a, b), K'_\ell(a, b)$]
Let $G$ be a graph, and $a, b\in V(G)$, $\ell\geq 3$.
\[
\begin{split}
K_\ell(a, b):=\{\,&\emptyset\neq\{(x_{i_1}, i_1), \cdots, (x_{i_k}, i_k)\}\subset V(G)\times\{1, \cdots, \ell-1\}\\\
&\mid (a, x_{i_1}, \cdots, x_{i_k}, b)\prec\exists (a, x_1, \cdots, x_{\ell-1}, b)\in P_\ell(a, b)\,\}, \\
K'_\ell(a, b):=\{\,&\{(x_{i_1}, i_1), \cdots, (x_{i_k}, i_k)\}\in K_\ell(a, b)\mid L(a, x_{i_1}, \cdots, x_{i_k}, b)\leq \ell-1\,\}.
\end{split}
\]
\end{definition}

\begin{remark}
\begin{itemize}
\item{For simplicity, let us denote $\{(x_{i_1}, i_1), \cdots, (x_{i_k}, i_k)\}$ by $\{x_{i_1}, \cdots, x_{i_k}\}$ when there is no confusion.}
\item{It is easily seen that $K_\ell(a, b)$ is a simplicial complex. Both $K_{\ell-1}(a, b)$ and $K'_\ell(a, b)$ are subcomplexes of $K_\ell(a, b)$, and in general $K_{\ell-1}(a, b)\subsetneq K'_\ell(a, b)$.}
\item{The quotient $K_\ell(a, b)/K'_\ell(a, b)$ is a CW complex, and we call it the Asao-Izumihara complex.}
\end{itemize}
\end{remark}

\begin{proposition}[\cite{AI}, Proposition 2.9.]
Fix $\ell\geq 0$. Let $G$ be a graph, and $a, b\in V(G)$.
Denote the subcomplex of $\MC_*^\ell(G)$ generated by sequences which start with $a$ and end with $b$ by $\MC_*^\ell(a, b)$. Then we have an isomorphism
\[
\MC_*^\ell(G)\cong\bigoplus_{a, b\in V(G)} \MC_*^\ell(a, b)
\]
of chain complexes.
\end{proposition}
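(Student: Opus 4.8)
The plan is to recognise that this proposition is essentially a bookkeeping statement: the free generators of $\MC_k^\ell(G)$ are partitioned by their ordered pair of endpoints, and the magnitude differential never moves an endpoint. So the only thing to verify carefully is that $\partial$ is compatible with this partition.

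First I would record that, as a graded abelian group, $\MC_k^\ell(G)$ is free with basis the set of sequences $x=(x_0,\dots,x_k)\in V(G)^{k+1}$ satisfying $L(x)=\ell$. Each such basis element determines a unique ordered pair $(x_0,x_k)=(a,b)\in V(G)\times V(G)$, and every sequence has such a pair. Hence, letting $\MC_k^\ell(a,b)$ be the subgroup spanned by those sequences $x$ with $x_0=a$ and $x_k=b$, one obtains in each degree $k$ a direct sum decomposition $\MC_k^\ell(G)=\bigoplus_{a,b\in V(G)}\MC_k^\ell(a,b)$ of abelian groups.

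The second step is to check that this decomposition is respected by the boundary. Recall $\partial=\sum_{i=1}^{k-1}(-1)^i\partial_i$, where $\partial_i$ either kills a generator or deletes its $i$-th entry $x_i$. Since the index runs only over $i\in\{1,\dots,k-1\}$, the entries $x_0$ and $x_k$ are never deleted; and $\partial_i$ deletes $x_i$ only when it is a smooth point, so the length $\ell$ is preserved. Therefore $\partial_i(x_0,\dots,x_k)$ is either $0$ or a sequence of length $\ell$ still starting with $a=x_0$ and ending with $b=x_k$. It follows that $\partial$ maps $\MC_k^\ell(a,b)$ into $\MC_{k-1}^\ell(a,b)$, so each $\MC_*^\ell(a,b)$ is a subcomplex and coincides with the subcomplex of $\MC_*^\ell(G)$ generated by the sequences with endpoints $(a,b)$. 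Assembling these over all pairs $(a,b)$ yields the asserted isomorphism of chain complexes.

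I do not expect a genuine obstacle here; the only points requiring a moment's care are the range of the summation defining $\partial$ (it must exclude $i=0$ and $i=k$, which is exactly how the magnitude differential is defined) together with the degenerate cases $k=0,1$, where $\partial=0$ automatically and the statement is trivial. Everything else is immediate from the definitions of $\MC_*^\ell$ and of $\partial$.
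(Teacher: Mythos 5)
Your proof is correct: the paper itself only cites this statement from Asao--Izumihara without reproducing an argument, and your verification (the generators are partitioned by endpoints, and $\partial=\sum_{i=1}^{k-1}(-1)^i\partial_i$ never deletes $x_0$ or $x_k$ and preserves the length $\ell$, so each $\MC_*^\ell(a,b)$ is a subcomplex) is exactly the standard bookkeeping proof. Nothing further is needed.
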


\begin{theorem}[\cite{AI}, Theorem 4.3.]\label{thm:AI}
Let $\ell\geq 3$, $*\geq 0$. Then, the isomorphism
\[
(C_*(K_\ell(a, b), K'_\ell(a, b)), -\partial)\cong (\MC_{*+2}^\ell(a, b), \partial)
\]
of chain complexes holds. 
\end{theorem}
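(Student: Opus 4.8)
I would prove this by exhibiting an explicit degree-wise bijection of generators and then checking that it intertwines the two differentials, the sign mismatch being exactly what forces the $-\partial$ in the statement. First I would pin down the relative chain groups. Every simplex $\{(x_{i_1},i_1),\dots,(x_{i_k},i_k)\}$ of $K_\ell(a,b)$ is by definition a subsequence (at the prescribed positions) of some $(a,x_1,\dots,x_{\ell-1},b)\in P_\ell(a,b)$, so by the triangle inequality its underlying sequence $(a,x_{i_1},\dots,x_{i_k},b)$ has length at most $\ell$, and it lies in $K'_\ell(a,b)$ precisely when that length is $\le\ell-1$. Hence $C_{k-1}(K_\ell(a,b),K'_\ell(a,b))$ is free on the ``full-length'' simplices, i.e. those with $L(a,x_{i_1},\dots,x_{i_k},b)=\ell$.

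Next I would observe that for a full-length simplex the position labels are forced by the underlying sequence. Writing $i_0=0$, $i_{k+1}=\ell$, $x_{i_0}=a$, $x_{i_{k+1}}=b$, the inequality $d(x_{i_{m-1}},x_{i_m})\le i_m-i_{m-1}$ holds for each $m$ (the path has $i_m-i_{m-1}$ edges between those positions), and since the left sides sum to $\ell=\sum(i_m-i_{m-1})$, equality holds throughout, so $i_m=\sum_{j=1}^{m}d(x_{i_{j-1}},x_{i_j})$. Conversely any length-$\ell$ sequence $(a=y_0,y_1,\dots,y_k,y_{k+1}=b)$ arises this way: concatenating a geodesic from $y_{m}$ to $y_{m+1}$ for each $m$ gives an element of $P_\ell(a,b)$ containing it as a subsequence at positions $i_m=\sum_{j=1}^m d(y_{j-1},y_j)$, and these positions are strictly increasing and lie in $\{1,\dots,\ell-1\}$ because $y_1\ne a$ and $y_k\ne b$. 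Thus, degree by degree, there is a bijection between the basis of $C_{k-1}(K_\ell(a,b),K'_\ell(a,b))$ and the basis of $\MC_{k+1}^\ell(a,b)$, inducing an isomorphism of graded groups $\phi\colon\MC_{*+2}^\ell(a,b)\to C_*(K_\ell(a,b),K'_\ell(a,b))$.

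Then I would compare the differentials on a full-length simplex $\sigma$. Ordering the vertex set $V(G)\times\{1,\dots,\ell-1\}$ so that within any simplex the vertices are listed by increasing position label, the cellular boundary deletes $(x_{i_m},i_m)$ with sign $(-1)^{m-1}$; the resulting simplex is still a subsequence of the same path, and it is again full-length — hence nonzero in the quotient — exactly when deleting $x_{i_m}$ does not shorten $(a,x_{i_1},\dots,x_{i_k},b)$, i.e. exactly when $x_{i_m}$ is a smooth point. In that case smoothness gives $d(x_{i_{m-1}},x_{i_{m+1}})=d(x_{i_{m-1}},x_{i_m})+d(x_{i_m},x_{i_{m+1}})$, so the position labels of the surviving vertices are unchanged and the face is again in ``normal form''; hence $\phi$ carries the magnitude face map $\partial_m$ to the $m$-th face in the quotient, and it sends the non-smooth faces to the zero chains that $K'_\ell(a,b)$ kills. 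Since the magnitude differential weights the deletion of the $m$-th internal vertex by $(-1)^m$ whereas the cellular one weights the corresponding face by $(-1)^{m-1}$, one gets $\overline{\partial}^{\mathrm{cell}}\circ\phi=-\phi\circ\partial$, i.e. $\phi$ is an isomorphism of chain complexes $(\MC_{*+2}^\ell(a,b),\partial)\xrightarrow{\ \sim\ }(C_*(K_\ell(a,b),K'_\ell(a,b)),-\partial)$, which is the assertion.

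The triangle-inequality computations are routine; the step to handle with care is the bookkeeping of position labels under faces — verifying that a face of a full-length simplex is again in normal form, and fixing a total order on the vertex set so that the cellular signs line up to produce precisely $-\partial$ rather than $+\partial$. I would also explicitly note the boundary behaviour when the sequence revisits $a$ or $b$: such simplices automatically fail to be full-length (their forced first or last position would be $0$ or $\ell$), so they are among the ones already killed in $K'_\ell(a,b)$ and do not interfere with the bijection.
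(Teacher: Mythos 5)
Your argument is correct: the identification of the relative chains with the ``full-length'' simplices, the observation that $L=\ell$ forces the position labels $i_m=\sum_{j\le m}d(x_{i_{j-1}},x_{i_j})$ (which gives both injectivity and, via concatenated geodesics, surjectivity of the generator bijection), and the comparison of $(-1)^{m-1}$ versus $(-1)^m$ producing exactly the $-\partial$ are precisely the points that need to be checked. Note that the paper itself does not prove this statement---it is quoted from Asao--Izumihara \cite{AI}---so there is no in-paper proof to compare against; your reconstruction follows the same natural route as the cited source (generator bijection plus smoothness/sign bookkeeping), with only a minor caveat that your closing remark should say ``simplices whose first internal vertex is $a$ or last internal vertex is $b$'' rather than any revisit of $a$ or $b$, since, e.g., $\{(b,1),(a,2),(b,3)\}$ in the $C_4$ example is full-length.
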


\begin{corollary}[\cite{AI}, Corollary 4.4.]\label{cor:AI}
Let $\ell\geq 3$.
\begin{itemize}
\item{In the case of $k\geq 3$, $\MH_k^\ell(a, b)\cong H_{k-2}(K_\ell(a, b), K'_\ell(a, b)). $}
\item{In the case of $k=2$, $\MH_2^\ell(a, b)\cong\begin{cases}H_0(K_\ell(a, b), K'_\ell(a, b))&(d(a, b)<\ell), \\
\tilde{H_0}(K_\ell(a, b))&(d(a, b)=\ell), \end{cases}$
where $\tilde{H_0}(K_\ell(a, b))$ is the reduced homology.}
\end{itemize}
\end{corollary}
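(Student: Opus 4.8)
The plan is to obtain the corollary by passing to homology in the chain-level isomorphism of Theorem~\ref{thm:AI}, keeping track of the degree shift by $2$ and, for small $k$, of the low-degree part of $\MC_*^\ell(a,b)$ that is not seen by that isomorphism.

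For $k\geq 3$ the argument is immediate. The three chain groups $\MC_{k+1}^\ell(a,b)$, $\MC_k^\ell(a,b)$, $\MC_{k-1}^\ell(a,b)$ relevant for computing $H_k$ then all sit in degrees $\geq 2$, so Theorem~\ref{thm:AI} identifies the corresponding stretch of the magnitude chain complex with the stretch of $C_*(K_\ell(a,b),K'_\ell(a,b))$ shifted up by $2$; the sign ($-\partial$ versus $\partial$) does not affect homology. Hence $\MH_k^\ell(a,b)=H_k(\MC_*^\ell(a,b))\cong H_{k-2}(K_\ell(a,b),K'_\ell(a,b))$.

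For $k=2$ more care is needed, because computing $H_2(\MC_*^\ell(a,b))$ also involves $\MC_1^\ell(a,b)$ and the boundary $\partial\colon \MC_2^\ell(a,b)\to \MC_1^\ell(a,b)$, neither of which is recorded by Theorem~\ref{thm:AI}. I would split into two cases according to $d(a,b)$. If $d(a,b)<\ell$, there is no length-$\ell$ sequence of the shape $(a,b)$, so $\MC_1^\ell(a,b)=0$ and the map $\partial\colon\MC_2^\ell(a,b)\to\MC_1^\ell(a,b)$ is zero; thus $\MH_2^\ell(a,b)=\MC_2^\ell(a,b)/\Im(\partial\colon\MC_3^\ell\to\MC_2^\ell)$, which under the degree-shifted isomorphism becomes $C_0(K_\ell(a,b),K'_\ell(a,b))/\Im\partial_1=H_0(K_\ell(a,b),K'_\ell(a,b))$. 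If $d(a,b)=\ell$, then every simplex of $K_\ell(a,b)$ has length exactly $\ell$ (one always has $L\geq d(a,b)=\ell$, while $L\leq\ell$ for subsequences of length-$\ell$ paths), so no simplex satisfies $L\leq\ell-1$, i.e. $K'_\ell(a,b)=\emptyset$ and $C_*(K_\ell(a,b),K'_\ell(a,b))=C_*(K_\ell(a,b))$. Moreover $\MC_1^\ell(a,b)=\Z\langle(a,b)\rangle$, and in every generator $(a,x_1,b)$ of $\MC_2^\ell(a,b)$ the vertex $x_1$ is a smooth point (its removal leaves length $d(a,b)=\ell$), so $\partial$ sends each generator to $-(a,b)$; under the isomorphism $\MC_2^\ell(a,b)\cong C_0(K_\ell(a,b))$ this is (minus) the augmentation map. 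Therefore $\Ker(\partial\colon\MC_2^\ell\to\MC_1^\ell)$ corresponds to the reduced $0$-chains and $\MH_2^\ell(a,b)\cong\tilde{H}_0(K_\ell(a,b))$.

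The whole proof is essentially bookkeeping once Theorem~\ref{thm:AI} is available. The only place where something genuinely has to be checked rather than read off the isomorphism is the case $k=2$, $d(a,b)=\ell$: there one must observe both that $K'_\ell(a,b)$ degenerates to the empty complex and that the residual boundary $\MC_2^\ell(a,b)\to\MC_1^\ell(a,b)$ — invisible to Theorem~\ref{thm:AI} — is precisely an augmentation, which is exactly why a reduced $H_0$, rather than a relative $H_0$, appears. I do not anticipate any other obstacle.
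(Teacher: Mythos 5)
Your proof is correct and follows the same route as the source: the paper simply quotes this statement from Asao--Izumihara, where it is obtained exactly as you do, by passing to homology in the chain isomorphism of Theorem~\ref{thm:AI} and handling the low degree $k=2$ separately (noting $\MC_1^\ell(a,b)=0$ when $d(a,b)<\ell$, and that $K'_\ell(a,b)=\emptyset$ with the residual boundary being the augmentation when $d(a,b)=\ell$). No gaps.
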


\begin{example}
\label{ex:complete}
Let $K_N$ be the complete graph with $N$ vertices. Let $\ell\geq 3$. Fix $a, b\in V(K_N)$. 
Then the Asao-Izumihara complex $K_\ell(a, b)/K'_\ell(a, b)$ is homotopy equivalent to wedge of spheres.
In fact, for any maximal faces of $K_\ell(a, b)$, the boundaries of them are consisting of simplices of $K'_\ell(a, b)$.
\end{example}

\begin{example}
Let $C_4$ be the cycle graph with four vertices $a, b, c, d$ in this order 
(Figure \ref{fig:C_4}). 
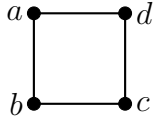
\begin{figure}[htbp]
\centering
\begin{tikzpicture}[scale=1.2]

\filldraw[fill=black, draw=black] (0,0) circle (2pt) ;
\filldraw[fill=black, draw=black] (1,0) circle (2pt) ;
\filldraw[fill=black, draw=black] (0,1) circle (2pt) ;
\filldraw[fill=black, draw=black] (1,1) circle (2pt) ;

\draw[thick] (0,0)--(1,0);
\draw[thick] (0,0)--(0,1);
\draw[thick] (1,1)--(0,1);
\draw[thick] (1,1)--(1,0);

\draw (0,1) node[left]{$a$}; 
\draw (0,0) node[left]{$b$}; 
\draw (1,0) node[right]{$c$}; 
\draw (1,1) node[right]{$d$}; 

\end{tikzpicture}
\caption{The cycle graph $C_4$.}
\label{fig:C_4}
\end{figure}
Let $\ell=4$. Let us directly check that the Asao-Izumihara complex 
$K_4(a, a)/K'_4(a, a)$ is homotopy equivalent to wedge of $2$-spheres. 
The maximal faces of $K_4(a, a)$ are as follows.
\[
\begin{split}
&\{(b, 1), (a, 2), (b, 3)\},\ \{(b, 1), (a, 2), (d, 3)\},\ \{(b, 1), (c, 2), (b, 3)\},\\
&\{(b, 1), (c, 2), (d, 3)\},\ \{(d, 1), (a, 2), (b, 3)\},\ \{(d, 1), (a, 2), (d, 3)\},\\
&\{(d, 1), (c, 2), (b, 3)\},\ \{(d, 1), (c, 2), (d, 3)\}.
\end{split}
\]
The subcomplex $K'_4(a, a)$ is as follows.
\[
K'_4(a, a)=\left\{
\begin{array}{l}
\{(b, 1)\},\ \{(a, 2)\},\ \{(b, 3)\},\ \{(d, 3)\},\ \{(d, 1)\},\ \{(a, 2), (b, 3)\},\\
\{(b, 1), (b, 3)\},\ \{(b, 1), (a, 2)\},\ \{(a, 2), (d, 3)\},\ \{(d, 1), (a, 2)\},\\
\{(d, 1), (d, 3)\}
\end{array}
\right\}.
\]
Therefore $K_4(a, a)$ is isomorphic to the octahedron which is homeomorphic to $S^2$, and $K'_4(a, a)$ is homotopy equivalent to wedge of two $1$-spheres as in Figure \ref{fig:AIcpx}. (In Figure \ref{fig:AIcpx}, $K'_4(a, a)$ is drawn in red.)
Then the Asao-Izumihara complex $K_4(a, a)/K'_4(a, a)$ is homotopy equivalent to wedge of three $2$-spheres.

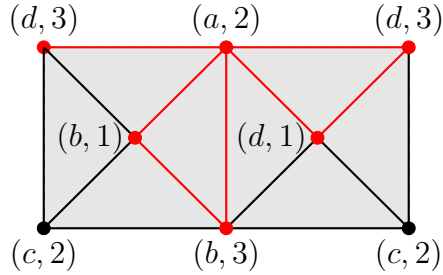
\begin{figure}[htbp]
\centering
\begin{tikzpicture}[scale=1.2]

\filldraw[fill=gray!20!white, draw=black, dashed, very thin] 
(0,0)--(0,2)--(4,2)--(4,0)--cycle;

\filldraw[fill=black, draw=black] (0,0) circle (2pt) ;
\filldraw[fill=red, draw=red] (0,2) circle (2pt) ;
\filldraw[fill=black, draw=black] (4,0) circle (2pt) ;

\draw[thick] (0,0)--(2,0);
\draw[thick] (0,0)--(0,2);
\draw[thick, red] (2,2)--(0,2);
\draw[thick, red] (2,2)--(2,0);
\draw[thick] (4,0)--(4,2);
\draw[thick] (4,0)--(2,0);
\draw[thick, red] (4,2)--(2,2);

\draw[thick] (0,0)--(1,1);
\draw[thick] (0,2)--(1,1);
\draw[thick, red] (2,0)--(1,1);
\draw[thick, red] (2,2)--(1,1);

\draw[thick] (2,0)--(3,1);
\draw[thick, red] (2,2)--(3,1);
\draw[thick] (4,0)--(3,1);
\draw[thick, red] (4,2)--(3,1);

\filldraw[fill=red, draw=red] (2,0) circle (2pt) ;
\filldraw[fill=red, draw=red] (2,2) circle (2pt) ;
\filldraw[fill=red, draw=red] (4,2) circle (2pt) ;
\filldraw[fill=red, draw=red] (1,1) circle (2pt) ;
\filldraw[fill=red, draw=red] (3,1) circle (2pt) ;

\draw (0,2) node[above]{$(d, 3)$}; 
\draw (0,0) node[below]{$(c, 2)$}; 
\draw (1,1) node[left]{$(b, 1)$}; 
\draw (2,0) node[below]{$(b, 3)$}; 
\draw (2,2) node[above]{$(a, 2)$}; 
\draw (3,1) node[left]{$(d, 1)$}; 
\draw (4,0) node[below]{$(c, 2)$}; 
\draw (4,2) node[above]{$(d, 3)$};

\end{tikzpicture}
\caption{$K_4(a, a)$ and $K'_4(a, a)$ associated with $C_4$.}
\label{fig:AIcpx}
\end{figure}
\end{example}

\subsection{Discrete Morse theory}
\label{sec:morse}

In this subsection we recall discrete Morse theory to use 
in the proof of Theorem \ref{thm:main}. Our description follows \cite{K}. 
Let $S\subset P(V)$ be a simplicial complex. 
Let us define $a\leq b$ by $a\subseteq b$ for $a, b\in S$. Then $(S, \leq)$ is a poset, 
called the face poset of $S$.

\begin{definition}[partial matching]
Let $P$ be a poset. 
A partial matching $M$ is a subset $M\subseteq P\times P$ satisfying the 
followings.
\begin{itemize}
\item{For any $(a, b)\in M$, $a\prec b$ holds,
 where $a\prec b$ means that $a<b$ and there does not exist any $c\in P$ such that $a<c<b$. }
\item{Each $a\in P$ belongs to at most one element in $M$.}
\end{itemize}
If $(a, b)\in M$, let us denote $a$ by $d(b)$ and $b$ by $u(a)$.
\end{definition}

\begin{definition}[acyclic matching]
A partial matching $M$ on the poset $P$ is said to be acyclic 
if there does not exist a cycle
\begin{equation}
\label{eq:11}
b^1\succ d(b^1)\prec b^2\succ d(b^2)\prec\cdots\prec b^p\succ d(b^p)\prec b^{p+1}=b^1
\end{equation}
with $p\geq 2$, and $b_i\neq b_j$ for every $i, j\in\{1, 2, \cdots, p\}$.
\end{definition}

\begin{definition}[critical element]
Let $M$ be a partial matching on the poset $P$. An element $a\in P$ is called a critical element if $a$ does not belong to any element in $M$.
\end{definition}

\begin{theorem}[\cite{K}, Theorem 11.13.(b)]\label{thm:morse}
Let $P$ be the face poset of a simplicial complex $S$, 
and denote the number of critical $i$-dimensional simplex by $c_i$. 
Then, $S$ is homotopy equivalent to a CW complex with $c_i$ cells in dimension $i$.
\end{theorem}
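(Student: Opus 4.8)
The plan is to follow the classical route to the fundamental theorem of discrete Morse theory (see \cite{K}): turn the acyclic matching $M$ into an injective discrete Morse function, rebuild $S$ as an increasing union of subcomplexes indexed by that function, and check that passing from one to the next is always either an elementary collapse or the attachment of a single cell.

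First I would pass from the matching to a compatible linear order. Orient every edge of the Hasse diagram of $P$ from the larger element to the smaller one, and then reverse exactly the matched edges $a\prec u(a)$ with $(a,u(a))\in M$; call the resulting directed graph $D_M$. A directed cycle in $D_M$ cannot contain two consecutive edges of the same kind, since each simplex belongs to at most one matched pair and $P$ is graded by dimension; hence such a cycle alternates reversed (matched) up-steps with ordinary down-steps and is precisely a cycle of the shape \eqref{eq:11}. So acyclicity of $M$ says that $D_M$ is a directed acyclic graph, and therefore admits a topological sort: a bijection $g\colon S\to\{1,\dots,N\}$ strictly increasing along every arrow. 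Setting $f:=N+1-g$, a matched pair $(a,u(a))$ then has $f(u(a))<f(a)$ while every unmatched cover $\nu\prec\mu$ has $f(\nu)<f(\mu)$, and a short verification shows that $f$ is a discrete Morse function whose gradient pairs are exactly the pairs of $M$ and whose critical cells are exactly the critical elements of $M$.

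Next I would rebuild $S$. For $0\le i\le N$ put $S_i:=\bigcup_{f(\sigma)\le i}\overline{\sigma}$, the union of the closed simplices of $f$-value at most $i$; this is a subcomplex, $S_0=\emptyset$, $S_N=S$, and (using the order from the previous step) $S_i$ is obtained from $S_{i-1}$ by adjoining $\sigma_i:=f^{-1}(i)$ and at most one further simplex. Exactly one of three things occurs. If $\sigma_i$ is critical of dimension $p$, all of its proper faces have smaller $f$-value, hence already lie in $S_{i-1}$, so $S_i=S_{i-1}\cup_{\partial\sigma_i}\sigma_i$ is $S_{i-1}$ with a $p$-cell attached. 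If $\sigma_i=u(\nu)$ is the upper element of a matched pair, then $f(\nu)>f(\sigma_i)$ and $\nu$ occurs in no simplex of $S_{i-1}$, so $S_i=S_{i-1}\cup\{\nu,\sigma_i\}$ with $\nu$ a free face of $\sigma_i$ in $S_i$; hence $S_i\searrow S_{i-1}$ is an elementary collapse and the homotopy type is unchanged. If $\sigma_i=d(\tau)$ is the lower element of a matched pair, then $f(\tau)<i$, so $\sigma_i$ was already brought in (as the free face of $\tau$) at stage $f(\tau)$, and $S_i=S_{i-1}$.

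Finally I would assemble the conclusion: as $i$ runs from $0$ to $N$ the homotopy type changes only when a critical cell is adjoined, and then exactly by attaching one cell of its dimension, while collapses and trivial stages preserve it; since attaching a cell along a map descends to homotopy-equivalence classes of spaces, induction on $i$ produces a CW complex $X_i$ with one cell of dimension $p$ for every critical $p$-simplex of $S_i$ together with a homotopy equivalence $X_i\simeq S_i$, and $i=N$ gives the statement. The one place the hypothesis is genuinely used is the first step, so I expect extracting the compatible ordering (equivalently, the discrete Morse function) to be the crux; the rest is bookkeeping about elementary collapses and cell attachments, the single delicate point being the claim in the second case of the third step that $\sigma_i$ is really the \emph{unique} coface of $\nu$ in $S_i$, which is exactly what the topological sort of $D_M$ guarantees and what can fail when a cycle \eqref{eq:11} is present.
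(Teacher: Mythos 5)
The paper does not prove this statement at all --- it is quoted from Kozlov \cite{K}, Theorem 11.13(b) --- so there is no internal proof to compare against; your proposal is the standard argument from that source (equivalently Forman's): acyclicity of $M$ is, by the alternation/counting argument you sketch, equivalent to the modified Hasse diagram being a DAG, a topological sort then yields an injective discrete Morse function whose gradient pairs are exactly the matched pairs, and the level-subcomplex filtration changes only by elementary collapses (matched pairs) or single cell attachments (critical cells), which assembles into the stated CW model. The outline is correct; the only steps that need the small inductions you elide are the claims that the matched face $\nu$ meets no simplex of $S_{i-1}$ and that $\sigma_i$ itself is not already in $S_{i-1}$ (from an arbitrary coface one descends through unmatched covers, staying above $\nu$ resp.\ $\sigma_i$, and the sort order then forces its value to exceed $i$), which is exactly the delicate point you flag in your closing remark, so I regard this as a compression rather than a gap.
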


\section{Main results}
\label{sec:main}

In this section we study the homotopy type of the Asao-Izumihara complex 
in connection with diagonality. 
Using Corollary \ref{cor:AI}, the diagonality of a graph is characterized 
by the vanishing of homology groups of the Asao-Izumihara complex. 
We have the following.

\begin{proposition}\label{prop:diagonal}
Let $G$ be a graph. The following are equivalent.
\begin{itemize}
\item[$(\mathrm{I})$]{The graph $G$ is diagonal.}
\item[$(\mathrm{II})$]{$\tilde{H}_k(K_\ell(a, b)/K'_\ell(a, b))=0$ $(\ell\geq 3,\ k\neq \ell-2)$ for any $a, b\in V(G).$}
\end{itemize}
\end{proposition}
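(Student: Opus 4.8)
The plan is to prove the equivalence by unpacking the definitions and invoking Corollary \ref{cor:AI} together with the summand decomposition from Proposition~2.8 (the Asao decomposition $\MC_*^\ell(G)\cong\bigoplus_{a,b}\MC_*^\ell(a,b)$). Since magnitude homology splits as $\MH_k^\ell(G)\cong\bigoplus_{a,b\in V(G)}\MH_k^\ell(a,b)$, the graph $G$ is diagonal if and only if $\MH_k^\ell(a,b)=0$ for all $a,b$ and all $k\neq\ell$. So it suffices to show, for fixed $a,b$, that $\MH_k^\ell(a,b)=0$ for all $k\neq\ell$ is equivalent to $\tilde H_k(K_\ell(a,b)/K'_\ell(a,b))=0$ for all $k\neq\ell-2$ (for every $\ell\geq 3$). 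First I would dispose of the small-$\ell$ cases: for $\ell=0$ one has $\MH_k^0(G)=0$ unless $k=0$, and $\MH_0^0=\MH_0^0$ is concentrated in degree $0=\ell$; for $\ell=1,2$ the nonvanishing magnitude homology also sits only on the diagonal (indeed $\MH_k^\ell=0$ for $k>\ell$ always, and for $\ell\le 2$ the groups below the diagonal vanish as well by a direct check on short sequences), so these impose no constraint and correspond to no Asao--Izumihara complex. Hence only $\ell\geq 3$ matters, which is exactly the range in which $K_\ell(a,b)$ is defined.

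For $\ell\geq 3$, I would use Corollary \ref{cor:AI}. For $k\geq 3$ it gives $\MH_k^\ell(a,b)\cong H_{k-2}(K_\ell(a,b),K'_\ell(a,b))$, and since $K'_\ell(a,b)$ is a nonempty subcomplex, the long exact sequence of the pair (or the fact that for a CW pair with $K'$ nonempty the relative homology equals the reduced homology of the quotient) yields $H_{k-2}(K_\ell,K'_\ell)\cong\tilde H_{k-2}(K_\ell/K'_\ell)$. So $\MH_k^\ell(a,b)=0$ for a given $k\ge 3$ iff $\tilde H_{k-2}(K_\ell(a,b)/K'_\ell(a,b))=0$, i.e. iff $\tilde H_j(K_\ell/K'_\ell)=0$ for $j=k-2\geq 1$. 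For the boundary case $k=2$ I would split into $d(a,b)<\ell$ and $d(a,b)=\ell$. When $d(a,b)<\ell$, Corollary \ref{cor:AI} gives $\MH_2^\ell(a,b)\cong H_0(K_\ell,K'_\ell)\cong\tilde H_0(K_\ell/K'_\ell)$, matching $j=0=k-2$. When $d(a,b)=\ell$, the unique path realizing length $\ell$ forces $K'_\ell(a,b)=\emptyset$, so the quotient is $K_\ell(a,b)$ with a disjoint basepoint and $\tilde H_0(K_\ell/K'_\ell)\cong\tilde H_0(K_\ell(a,b))\cong\MH_2^\ell(a,b)$ again, still $j=0$. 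Finally $\MH_k^\ell(a,b)=0$ automatically for $k<2$ when $\ell\geq 3$ (a sequence of length $\ell\geq 3$ from $a$ to $b$ has at least one non-endpoint, so $\MC_0^\ell(a,b)=\MC_1^\ell(a,b)=0$ once $d(a,b)\neq\ell$, and if $d(a,b)=\ell$ then $\MH_0^\ell,\MH_1^\ell$ are again $0$), matching the fact that $\tilde H_j(K_\ell/K'_\ell)$ with $j<0$ is vacuously zero.

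Assembling these: for $\ell\geq 3$, the condition $\MH_k^\ell(a,b)=0$ for all $k\neq\ell$ is, after the reindexing $j=k-2$, exactly the condition $\tilde H_j(K_\ell(a,b)/K'_\ell(a,b))=0$ for all $j\neq\ell-2$ (the case $j=\ell-2$ being the only degree not forced to vanish). Quantifying over all $a,b$ and all $\ell\ge 3$, and combining with the trivial small-$\ell$ remark, gives (I) $\Leftrightarrow$ (II). I would write this as two implications for clarity, but the argument is the same in both directions since every step is an isomorphism.

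The one point that needs genuine care — and which I expect to be the main (if modest) obstacle — is the bookkeeping at the bottom degrees: matching the reduced-versus-unreduced homology of the quotient in the two sub-cases $d(a,b)<\ell$ and $d(a,b)=\ell$, and confirming that the degrees $k<2$ (equivalently $j<0$) really contribute nothing on the magnitude side, so that the equivalence is not accidentally broken by a stray low-degree class. All of this is contained in Corollary \ref{cor:AI} and the observation $K'_\ell(a,b)=\emptyset \iff d(a,b)=\ell$; once those are in hand the proof is a direct translation.
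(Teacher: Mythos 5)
Your proposal is correct and follows essentially the same route as the paper: decompose over pairs $(a,b)$, apply Corollary \ref{cor:AI} with the identification $H_*(K_\ell(a,b),K'_\ell(a,b))\cong\tilde H_*(K_\ell(a,b)/K'_\ell(a,b))$ for $k\geq 3$, treat $k=2$ via the two subcases $d(a,b)<\ell$ and $d(a,b)=\ell$, and dispose of the low degrees $k\leq 1$ and small $\ell$ by a direct check on short sequences, exactly as in the paper's argument. The only wobble is your aside that the quotient by $K'_\ell(a,b)=\emptyset$ is $K_\ell(a,b)$ with a disjoint basepoint while still writing $\tilde H_0$ of it as $\tilde H_0(K_\ell(a,b))$ — a convention-level bookkeeping point at degree $0$ that the paper's own proof handles with the same unadorned isomorphism, so it does not constitute a genuine gap.
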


\begin{proof}
Suppose the condition ($\mathrm{II}$) holds. We will show that $G$ is diagonal, i.e. $\MH_k^\ell(G)=0$ for $k\neq\ell$.
\begin{itemize}
\item{In the case of $\ell\geq 3$ and $k\geq 3$, by Corollary \ref{cor:AI}
\[
\MH_k^\ell(a, b)\Bigl(\cong H_{k-2}(K_\ell(a, b), K'_\ell(a, b))\Bigr)\cong\tilde{H}_{k-2}(K_\ell(a, b)/K'_\ell(a, b))=0
\]
for $k\neq\ell$ and any $a, b\in V(G)$. Then $\MH_k^\ell(G)=0$ for $k\neq\ell$.
}
\item{In the case of $\ell\geq 3$ and $k=2$, by Corollary \ref{cor:AI} 
\[
\MH_2^\ell(a, b)\cong\begin{cases}H_0(K_\ell(a, b), K'_\ell(a, b))\cong \tilde{H}_0(K_\ell(a, b)/K'_\ell(a, b))=0&(d(a, b)<\ell)\\
\tilde{H_0}(K_\ell(a, b))\cong\tilde{H_0}(K_\ell(a, b)/K'_\ell(a, b))=0&(d(a, b)=\ell).\end{cases}
\]
for any $a, b\in V(G)$. Then $\MH_k^\ell(G)=0$ for $k\neq\ell$.}
\item{In the case of $\ell\geq 2$ and $k=1$, $\MH_1^\ell(G)=0$ also holds since any chain $(x_0, x_1)\in \Ker\partial_1=\MC_1^\ell(G)$ is in $\Im\partial_2$. }
\end{itemize}
Note that $\MH_0^\ell(G)=0$ for any graphs and $\ell\geq 1$. Then the condition ($\mathrm{I}$) holds. 
The proof of ($\mathrm{I})\Rightarrow (\mathrm{II})$ is obvious.
\end{proof}

Furthermore, we also have the following.

\begin{proposition}\label{prop:wedge}
Let $G$ be a graph. If for any $a, b\in V(G)$, the Asao-Izumihara complex $K_\ell(a, b)/K'_\ell(a, b)$ is empty or contractible or homotopy equivalent to 
wedge of $(\ell-2)$-dimensional spheres, then the graph $G$ is diagonal.
\end{proposition}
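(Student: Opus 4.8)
The plan is to obtain this as an immediate formal consequence of Proposition~\ref{prop:diagonal}. By that proposition, it suffices to verify its condition~(II): that $\tilde{H}_k(K_\ell(a,b)/K'_\ell(a,b)) = 0$ for every $\ell \ge 3$, every $k \ne \ell-2$, and every $a,b \in V(G)$. Since the Asao--Izumihara complex is defined precisely for $\ell \ge 3$, the range of $\ell$ in the hypothesis of Proposition~\ref{prop:wedge} matches the range appearing in~(II) exactly, so there is nothing extra to treat for small~$\ell$.

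Concretely, I would fix $a,b \in V(G)$ and $\ell \ge 3$, set $X := K_\ell(a,b)/K'_\ell(a,b)$, and split into the three cases allowed by the hypothesis. If $X$ is contractible then $\tilde{H}_*(X) = 0$, so in particular $\tilde{H}_k(X) = 0$ for all $k \ne \ell - 2$. If $X$ is homotopy equivalent to a wedge of $m \ge 0$ copies of $S^{\ell-2}$, then $\tilde{H}_{\ell-2}(X) \cong \mathbb{Z}^m$ while all other reduced homology groups vanish, so again $\tilde{H}_k(X) = 0$ for every $k \ne \ell - 2$. In the remaining "empty" case one reads $K_\ell(a,b)/K'_\ell(a,b)$ as a one-point space (this is the situation $P_\ell(a,b) = \emptyset$, where $K_\ell(a,b) = \emptyset$ and, via Theorem~\ref{thm:AI}, $\MC_{*+2}^\ell(a,b) = 0$), and its reduced homology is trivially zero. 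In all three cases condition~(II) of Proposition~\ref{prop:diagonal} holds, hence $G$ is diagonal.

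I do not expect any genuine obstacle here: the statement is purely formal, since all three permitted homotopy types have reduced homology concentrated in degree $\ell - 2$ (or identically zero), which is exactly what condition~(II) requires. The only point I would spend a sentence on in the write-up is the degenerate empty case, to make clear that the convention used for the quotient makes its reduced homology vanish.
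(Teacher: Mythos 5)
Your proposal is correct and follows essentially the same route as the paper: the paper's proof likewise just observes that in each of the three permitted cases the reduced homology of $K_\ell(a,b)/K'_\ell(a,b)$ vanishes in degrees $k\neq \ell-2$, and then invokes Proposition \ref{prop:diagonal}. Your extra sentence on the empty/degenerate case is a reasonable clarification that the paper leaves implicit.
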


\begin{proof}
Suppose $K_\ell(a, b)/K'_\ell(a, b)$ is as above. Then, 
\[
\tilde{H}_k(K_\ell(a, b)/K'_\ell(a, b))=0
\]
for $\ell\geq 3$, $k\neq \ell-2$ and any $a, b\in V(G)$. By Proposition \ref{prop:diagonal}, $G$ is diagonal. 
\end{proof}

Pawful graphs are defined in \cite{G} as a new class of diagonal graphs. 
For example, the join of non-empty graphs are pawful graphs (complete graphs are contained in it). We already know that the Asao-Izumihara complexes for complete graphs are homotopy equivalent to wedge of spheres (Example \ref{ex:complete}). One of the purposes of this paper is to show that the Asao-Izumihara complexes for pawful graphs are homotopy equivalent to wedge of spheres. 

\begin{definition}[pawful graph]
\label{def:pawful}
A (connected) graph $G$ is called a pawful graph if it satisfies the following conditions.
\begin{itemize}
\item{$d(x, y)\leq 2$ for any $x, y\in V(G)$, }
\item{for any $x, y, z\in V(G)$ with $d(x, y)=d(y, z)=2$ and $d(x, z)=1$, there exists $a\in V(G)$ such that
 $d(a, x)=d(a, y)=d(a, z)=1$.}
\end{itemize}
\end{definition}
The following is the main result. 
\begin{theorem}
\label{thm:main}
Let $G$ be a pawful graph. Fix $\ell\geq 3$ and $a, b\in V(G).$ Then, the Asao-Izumihara complex 
$K_\ell(a, b)/K'_\ell(a, b)$ is empty or contractible or homotopy equivalent to wedge of $(\ell-2)$-spheres.
\end{theorem}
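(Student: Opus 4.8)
The plan is to build an explicit acyclic matching on the face poset of the pair $(K_\ell(a,b), K'_\ell(a,b))$ — equivalently, on the nonempty cells of the quotient CW complex $K_\ell(a,b)/K'_\ell(a,b)$, which correspond to simplices of $K_\ell(a,b)$ that do \emph{not} lie in $K'_\ell(a,b)$ — and then invoke Theorem \ref{thm:morse} to conclude that the quotient is homotopy equivalent to a CW complex with one cell in each dimension where there is a critical simplex. So the whole argument reduces to: (1) describe a matching, (2) show it is acyclic, (3) show every critical simplex has dimension exactly $\ell-2$ (the top dimension), at which point the complex is automatically a wedge of $(\ell-2)$-spheres (or contractible, if there are no critical cells, or empty). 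A simplex of $K_\ell(a,b)$ is a set $\{(x_{i_1},i_1),\dots,(x_{i_k},i_k)\}$ with $1\le i_1<\cdots<i_k\le \ell-1$, and it lies outside $K'_\ell(a,b)$ precisely when the word $(a,x_{i_1},\dots,x_{i_k},b)$ — reinserting $a$ at position $0$ and $b$ at position $\ell$ — has length $\ell$ while using only $k$ of the $\ell-1$ interior slots; the condition $L=\ell$ forces the "gaps" between consecutive used positions to be traversed by geodesics of exactly the right length.

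First I would set up notation for how a noncritical-in-$K'$ simplex $\sigma$ records a geodesic-type decomposition of a path, and identify, for each such $\sigma$, a canonical "missing" position $j\in\{1,\dots,\ell-1\}\setminus\{i_1,\dots,i_k\}$ — e.g. the smallest index $j$ not occupied such that inserting \emph{some} vertex at position $j$ keeps the simplex in $K_\ell(a,b)$ and keeps it outside $K'_\ell(a,b)$ (i.e. the insertion is a "smooth"/length-preserving insertion). The matching then pairs $\sigma$ with $\sigma\cup\{(y,j)\}$ for a \emph{canonically chosen} vertex $y$ at that position $j$. This is exactly where the pawful hypothesis enters: when the gap around position $j$ has length $2$ and is straddled by two geodesic endpoints $x$ at position $j-1$ (or $a$) and $z$ at position $j+1$ (or $b$) with $d(x,z)\le 2$, the pawful condition supplies a vertex $a'$ adjacent to everything relevant, which is precisely the vertex we can always insert; and one needs a deterministic rule (say, pick the smallest such $a'$ in a fixed total order on $V(G)$) so that the pairing $\sigma\leftrightarrow\sigma\cup\{(y,j)\}$ is well-defined in both directions. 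Because $d(x,y)\le 2$ everywhere, every gap has combinatorial length $\le 2$, so the only two cases to analyze are gaps of length $1$ (vertex forced, or freely insertable among common neighbors) and gaps of length $2$ (pawful condition). I would organize the matching by "first fixable gap from the left," which is the standard device that makes acyclicity provable.

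The acyclicity check is the technical heart. I would argue that along any alternating path $\sigma_1 \prec u(\sigma_1) \succ \sigma_2 \prec \cdots$, the index $j(\sigma)$ of the gap being manipulated is monotone (weakly decreasing, say): removing the matched vertex at position $j(\sigma)$ to go down, then going up to a \emph{different} coface, can only be done by adding a vertex at a position $\le j(\sigma)$ without changing anything to the left of it — otherwise the "first fixable gap" would not have moved — and a closed cycle would then force strict decrease somewhere, a contradiction. Concretely I would phrase this as a weight/lexicographic argument: assign to each simplex the tuple of its occupied positions, compare lexicographically, and show every $M$-alternating down-up step strictly decreases (or strictly increases) this weight, which rules out cycles. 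The main obstacle, and where I expect to spend the most care, is precisely making the canonical choice of inserted vertex \emph{consistent} — i.e. that starting from $\sigma\cup\{(y,j)\}$ the rule "first fixable gap, canonical vertex there" returns $(y,j)$ and not some other insertion — and simultaneously verifying that a simplex is critical if and only if it is already of full dimension $\ell-2$ (every interior position occupied), because for a full-dimensional simplex there is no position left to fix. Once that equivalence is established, Theorem \ref{thm:morse} gives a CW complex with cells only in dimension $\ell-2$ (plus the single $(-1)$-cell of the quotient, i.e. the basepoint), which is a wedge of $(\ell-2)$-spheres; if there are no full-dimensional simplices outside $K'_\ell(a,b)$ the complex is contractible, and if $P_\ell(a,b)=\emptyset$ it is empty, covering all three alternatives in the statement.
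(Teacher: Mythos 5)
Your overall strategy coincides with the paper's: match each simplex of $K_\ell(a,b)$ outside $K'_\ell(a,b)$ that has a distance-$2$ gap with the simplex obtained by inserting a canonical vertex (supplied by pawfulness) into the first such gap, verify acyclicity, note that critical cells must be top-dimensional, and apply Theorem \ref{thm:morse}. The genuine gap is in the acyclicity step, which you correctly identify as the technical heart but support with two claims that fail. First, the index of the manipulated gap is \emph{not} weakly monotone along alternating paths: after inserting $z$ into the first gap between $x_j$ and $x_{j+1}$, one may legally remove $x_{j+1}$ (it is smooth when $d(x_{j+1},x_{j+2})=1$ and $d(z,x_{j+2})=2$), and the resulting simplex has its first gap one position further to the right; the paper's own cycle analysis must explicitly allow $j^2=j^1+1$. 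Second, the lexicographic weight ``tuple of occupied positions'' carries no information here. In any legal down--up step $y\succ x\prec u(x)$, the removed vertex has both adjacent distances equal to $1$ (smoothness plus staying outside $K'_\ell(a,b)$ and diameter $\le 2$ force this), and the removal must create the \emph{first} distance-$2$ gap of $x$ — otherwise the insertion pattern sitting to the left of an older gap would survive and $x$ would be matched downward, not upward. Consequently the canonical vertex is reinserted at exactly the vacated position label, so every legal step preserves the set of occupied positions and merely exchanges the vertex at one label. Your weight is therefore constant along all alternating paths; the dangerous cycles are precisely those that keep the position set fixed while cycling through different vertices at the same labels, and neither of your mechanisms excludes them.

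Ruling out those cycles is where pawfulness is used in an essentially stronger way than in your sketch. What matters is not merely that a distance-$2$ gap admits a midpoint (diameter $\le 2$ already gives that), but that the canonical filler $\gamma$ for a pattern $(\alpha,\beta,\ast,\delta)$ can be chosen with $d(\alpha,\gamma)\le 1$, i.e.\ equal or adjacent to the vertex two steps back — this is exactly what the set $\scS$ encodes. The paper takes a hypothetical cycle, singles out the minimal manipulated index $j^1$, and shows by induction along the cycle, using this $d(\alpha,\gamma)\le 1$ property, that after the first insertion the two vertices flanking that spot stay at distance $\le 1$ forever; hence the gap at $j^1$ can never reopen, the index $j^1$ is never revisited, and the cycle cannot close. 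Your proposal invokes pawfulness only to define the insertion, so this mechanism is absent. A smaller inaccuracy: ``critical if and only if full-dimensional'' is false — a full-dimensional simplex containing an insertion pattern is matched downward; what is needed (and true) is only that critical simplices are full-dimensional, together with the fact that every non-top simplex outside $K'_\ell(a,b)$ is matched, which is what the paper's $i(x)$-versus-$j(x)$ trichotomy delivers.
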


By Theorem \ref{thm:main} and Proposition \ref{prop:wedge}, 
we have the following. 

\begin{corollary}[\cite{G}, Theorem 4.4.]
Pawful graphs are diagonal.
\end{corollary}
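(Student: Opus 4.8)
The strategy is to construct an explicit acyclic matching on the face poset of the simplicial complex $K_\ell(a,b)$ that restricts to an acyclic matching on the subcomplex $K'_\ell(a,b)$, and then invoke discrete Morse theory (Theorem \ref{thm:morse}) on the quotient. Since $K_\ell(a,b)/K'_\ell(a,b)$ is a CW complex whose cells correspond to the simplices of $K_\ell(a,b)$ not lying in $K'_\ell(a,b)$, a matching that pairs faces of $K_\ell(a,b)$ in a way compatible with $K'_\ell(a,b)$ (i.e.\ never pairing a face of $K'_\ell$ with a face outside $K'_\ell$) descends to a matching on the relative complex. The goal is to arrange that every surviving critical cell sits in dimension $\ell-2$; then Theorem \ref{thm:morse} forces the quotient to be homotopy equivalent to a CW complex with cells only in the top dimension $\ell-2$, hence a wedge of $(\ell-2)$-spheres (or contractible/empty if there are no such critical cells). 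Note a simplex $\sigma=\{(x_{i_1},i_1),\dots,(x_{i_k},i_k)\}\in K_\ell(a,b)$ lies outside $K'_\ell(a,b)$ exactly when $L(a,x_{i_1},\dots,x_{i_k},b)=\ell$, i.e.\ the underlying subsequence is already ``straightened'' at full length; in particular the top-dimensional (dimension $\ell-2$, i.e.\ $k=\ell-1$) non-relative cells are precisely the paths of $P_\ell(a,b)$.

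**Construction of the matching.**
The matching should be built by a ``first-available-vertex'' rule of the type standard in this area. Fix a total order on $V(G)$. Given a simplex $\sigma$ with filled coordinates at positions $i_1<\dots<i_k$ inside $\{1,\dots,\ell-1\}$, look for the smallest position $j\in\{1,\dots,\ell-1\}$ at which one can toggle a vertex without leaving $K_\ell(a,b)$ and without changing whether the simplex is in $K'_\ell$ — concretely, a position where adding (resp.\ removing) a vertex keeps the underlying sequence a subsequence of some genuine path in $P_\ell(a,b)$ and does not change the length $L$ of the straightened sequence past the $\ell-1/\ell$ threshold. If such a position exists, pair $\sigma$ with the simplex obtained by adding the (order-minimal admissible) vertex there if the position is empty, or by deleting the vertex there if it is filled. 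Here is where the pawful hypothesis enters: when $d(x,y)=d(y,z)=2$, $d(x,z)=1$ forces a ``witness'' vertex $a$ adjacent to all three, which is exactly what guarantees that a smooth point can always be inserted at an intermediate position of a length-$\ell$ path, so that the matching never gets stuck except on honest top cells. Verifying that this rule yields a genuine \emph{partial} matching (each face used once, matched faces differ by a single coordinate and hence are comparable by $\prec$) is the routine bookkeeping part.

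**Acyclicity and the critical cells.**
Two things then remain. First, acyclicity: one shows there is no cycle as in \eqref{eq:11}. The usual argument is that the ``pivot position'' $j(\sigma)$ — the index being toggled — behaves monotonically along any alternating path $b^1\succ d(b^1)\prec b^2\succ\cdots$, so that a closed cycle would force $j$ to both strictly increase and return to its start, a contradiction; this requires checking that going down then up can only move the pivot to a strictly later coordinate (or keep the multiset of earlier coordinates lexicographically smaller). Second, the critical-cell count: one argues that a simplex is critical precisely when no admissible toggle exists at any position, and using diameter $\le 2$ together with the pawful witness condition one shows this happens only for full-length paths modulo $K'_\ell$, i.e.\ only in dimension $\ell-2$. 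I expect the main obstacle to be exactly this last point — proving that for a pawful graph the matching leaves \emph{no} critical cells in intermediate dimensions; this is where one must carefully use both clauses of Definition \ref{def:pawful}, handling the cases $d(a,b)=0,1,2$ separately and tracking how the three-vertex configuration $d(x,y)=d(y,z)=2$, $d(x,z)=1$ can occur along a length-$\ell$ walk. Once the matching is shown acyclic with critical cells only in the top dimension, Theorem \ref{thm:morse} gives a CW model with cells only in dimension $\ell-2$; a connected such complex of dimension $\ge 1$ is automatically a wedge of $(\ell-2)$-spheres (and the degenerate cases give empty or contractible), completing the proof.
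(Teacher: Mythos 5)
Your overall strategy is the same as the paper's: construct an acyclic matching on the faces of $K_\ell(a,b)$ not lying in $K'_\ell(a,b)$ whose critical cells all have dimension $\ell-2$, conclude via discrete Morse theory that $K_\ell(a,b)/K'_\ell(a,b)$ is empty, contractible, or a wedge of $(\ell-2)$-spheres (Theorem \ref{thm:main}), and deduce diagonality through Proposition \ref{prop:wedge}. The problem is that the step you dismiss as routine bookkeeping is exactly where the content lies, and the specific rule you propose (toggle at the smallest admissible position, inserting the order-minimal admissible vertex) fails to be a partial matching. Concretely, suppose the first admissible toggle of a face is an insertion into a gap between consecutive entries $\beta,\delta$ with $d(\beta,\delta)=2$, and let $\alpha$ be the entry preceding $\beta$ (so $d(\alpha,\beta)=1$, and $\beta$ is not smooth there). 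If the order-minimal midpoint $z$ happens to satisfy $d(\alpha,z)=2$, then after inserting $z$ the vertex $\beta$ becomes smooth, since $d(\alpha,\beta)+d(\beta,z)=2=d(\alpha,z)$, and its removal keeps the length equal to $\ell$; hence the enlarged face has an admissible toggle at a position strictly earlier than that of $z$, and your rule pairs it with a different face rather than with the one you started from. So the pairing is not an involution, i.e.\ not a matching at all --- precisely the pitfall the paper warns about when it says a naive removal/insertion rule is ``neither matching nor acyclic.''

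The missing idea is the coherent choice of inserted vertex encoded in the paper's set $\scS$: when filling a gap $(\beta,\delta)$ preceded by $\alpha$, one inserts $\gamma=\alpha$ itself if $d(\alpha,\delta)=1$, the pawful witness $\gamma=p(\alpha,\delta,\beta)$ (adjacent to all of $\alpha,\beta,\delta$) if $d(\alpha,\delta)=2$, and a fixed midpoint $q(\beta,\delta)$ when the gap is at the initial position. The essential property is $d(\alpha,\gamma)\leq 1$: it guarantees that the insertion creates no earlier removable pattern (so $g\circ f=\id$ and the pairing is a genuine matching), and it also drives the acyclicity argument via the induction showing $d(y^t_{i^1},y^t_{i^1+2})\leq 1$ for $t\geq 3$, which prevents any cycle from returning to the minimal pivot. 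Note also that you locate the role of pawfulness in guaranteeing that a smooth point can be inserted at all; that much is automatic from diameter $\leq 2$ (any midpoint of $\beta,\delta$ is smooth). Pawfulness is needed precisely to choose the midpoint adjacent to the \emph{predecessor} $\alpha$ (the property $(\star)$ of \S\ref{sec:general}). Finally, your acyclicity sketch (the pivot ``can only move to a strictly later coordinate'') is not what happens for such matchings: in the paper's matching the pivot satisfies only $j^{t+1}\leq j^t+1$ and can move left, and the contradiction comes from the minimality of the smallest pivot combined with the distance estimate above. Without these ingredients your text is a plausible plan, but not yet a proof.
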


\section{Proof}
\label{sec:proofs}

To prove the main theorem (Theorem \ref{thm:main}), 
we will construct an acyclic matching on 
$K_\ell(a, b)\setminus K'_\ell(a, b)$. Recall that a pair consists of a lower 
dimensional simplex and a higher dimensional simplex. 
When we specify a pair, we obtain the lower dimensional one by 
removing a smooth point from a higher dimensional one. 
A naive such removal makes the pairing neither matching nor acyclic. 
We need to take care to make the paring both matching and acyclic. 

\begin{proof}[Proof of Theorem \ref{thm:main}]
The Theorem is proved by using discrete Morse theory. For this purpose, we construct an acyclic matching on $K_\ell(a, b)/K'_\ell(a, b)$.
Assume that $K_\ell(a, b)\neq \emptyset$. 
Let $P$ be the face poset of consisting of simplices of 
$K_\ell(a, b)$ which is not contained in $K'_\ell(a, b)$. 
We will construct acyclic matching on $P$ such that all critical simplices are 
$(\ell-2)$-dimensional. 
By definition of pawful graphs, there exists a map $p\colon\{ (x, y, z)\in V(G)^3\mid d(x, y)=d(y, z)=2,\ d(x, z)=1\}\rightarrow V(G)$ such that $v=p(x, y, z)$ satisfies 
$d(v, x)=d(v, y)=d(v, z)=1$. Also there exists a map $q\colon\{(x, y)\in V(G)^2\mid d(x, y)=2\}\rightarrow V(G)$ 
such that $w=q(x, y)$ satisfies $d(x, w)=d(w, y)=1$. 
Next, we define the set $\scS$ as follows.
\begin{equation}
\begin{split}
\scS:=&
\left\{
(\alpha, \beta, \gamma, \delta)\in V(G)^4\left| 
d(\alpha, \delta)=1,\ d(\beta, \delta)=2,\ d(\alpha, \beta)=1,\ \gamma=\alpha
\right.\right\}
\\
&\sqcup
\left\{(\alpha, \beta, \gamma, \delta)\in V(G)^4\left|
\begin{array}{l}
d(\alpha, \delta)=2,\ d(\beta, \delta)=2,\\
d(\alpha, \beta)=1,\ \gamma=p(\alpha, \delta, \beta)
\end{array}
\right.
\right\}
\\
&\sqcup
\{(\beta, \gamma, \delta)\in V(G)^3\mid d(\beta, \delta)=2,\ \gamma=q(\beta, \delta)\}. 
\end{split}
\end{equation}
Let $(\alpha, \beta, \gamma, \delta)\in \scS$. Then $\gamma$ is smooth, and $d(\alpha, \gamma)\leq 1$. 
For $x=(x_0, x_1, \cdots, x_k)\in P$, we define $i(x)$ and $j(x)$ as follows. If there exist any $i\in \{ 1, \cdots, k-2\}$ with $(x_{i-1}, x_i, x_{i+1}, x_{i+2})\in \scS$, denote the minimum of $i$ by $i(x)$. 
However in the case of $(x_0, x_1, x_2)\in \scS$, define $i(x)=0$.
If there does not exist such a $i\in \{ 1, \cdots, k-2\}$ and $i(x)\neq 0$, we define $i(x)=\infty$. Also, if there exist any $j\in \{ 0, \cdots, k-1\}$ with $d(x_j, x_{j+1})=2$, denote the minimum of $j$ by $j(x)$. If there does not exist such a $j\in \{ 0, \cdots, k-1\}$, we define $j(x)=\infty$. Next we define subsets $A, P', P''\subseteq P$ as follows.
\[
\begin{split}
A&:=\{x\in P\mid i(x)=\infty\text{ and }j(x)=\infty\},\\
P'&:=\{x\in P\mid i(x) > j(x)\}, \\
P''&:=\{x\in P\mid i(x) < j(x)\}.
\end{split}
\]
Define a map $f\colon P'\rightarrow P''$ by 
\[
x=(x_0, \cdots, x_k)\mapsto(x_0, \cdots, x_{j(x)-1}, x_{j(x)}, z, x_{j(x)+1}, \cdots, x_k ), 
\]
where $z$ satisfies $(x_0, z, x_1)\in \scS$ when $j(x)=0$, 
or $(x_{j(x)-1}, x_{j(x)}, z, x_{j(x)+1})\in \scS$ when $j(x)\geq 1$. 
Define $g\colon P''\rightarrow P'$ by $y=(y_0, \cdots, y_k)\mapsto (y_0, \cdots, \widehat{y_{i(y)+1}}, \cdots, y_k)$.
Then,  for $y=(y_0, \cdots, y_k)\in P''$, 
\[
\begin{split}
f\circ g((y_0, \cdots, y_k))&=f((y_0, \cdots, y_{i(y)}, y_{i(y)+2}, \cdots, y_k))\\
&=(y_0, \cdots, y_{i(y)}, z, y_{i(y)+2}, \cdots, y_k)\\
&=(y_0, \cdots, y_k).
\end{split}
\]
On the other hand, for $x=(x_0, \cdots, x_k)\in P'$, 
\[
g\circ f((x_0, \cdots, x_k))=g((x_0, \cdots, x_{j(x)}, z, x_{j(x)+1}, \cdots, x_k))=:(*).
\]
For $j(x)\geq 2$, $(x_{j(x)-2}, x_{j(x)-1}, x_{j(x)}, z)\notin \scS$ holds since $d(x_{j(x)-1}, z)\leq 1$. 
Also for $j(x)=1$, $(x_0, x_1, z)\notin \scS$ holds since $d(x_0, z)\leq 1$. 
Hence $(*)=(x_0, \cdots, x_k).$ 
Therefore, since $f\circ g=\id_{P''}$ and $g\circ f=\id_{P'}$, $f$ is bijection. 
Define a matching $M\subseteq P\times P$ by
\[
M:=\{(x, y)\in P\times P\mid x\in P',\ y\in P'',\ f(x)=y\}.
\]
Then, any $(x, y)\in M$ satisfies $x\prec y$. Since $f$ is bijection and $P'\cap P''=\emptyset$, 
each $x\in P$ belongs to at most one element in $M$. 
It remains to prove that $M$ is acyclic. 

Suppose that there exists the following cycle
\[
y^1\succ x^1\prec y^2\succ x^2\prec\cdots \succ x^p\prec y^{p+1}=y^1,
\]
where $y^{t+1}=f(x^t)\ (t\geq 1)$ and, 
$x^t$ is the removal of a smooth point $y_{i^t+1}^t$ from 
$y^t=(y_0^t, \cdots, y_k^t)\in P''$. 
By definition, we have $i(x^t)>j(x^t)$ and $i(y^t)<j(y^t)$. 
For $x^t\in P'$, let $j^t:=j(x^t)$. 
We may assume that $j^1$ is the minimum of all $j^t$. Then, the followings hold.
\begin{itemize}
\item{$j^t=i^t\ (t\geq 1)$.}
\item{$j^t+1\geq j^{t+1}$, $j^t\neq j^{t+1}\ (t\geq 1)$.}
\item{$j^2=j^1+1$ (because of the minimality of $j^1$).}
\end{itemize}
Let $y^1=(y_0, \cdots, y_k)$, then $x^1, y^2, x^2, y^3$ are as follows. 
\[
\begin{split}
x^1&=(y_0, \cdots, y_{i^1}, y_{i^1+2}, \cdots, y_k),\\
y^2&=(y_0, \cdots, y_{i^1}, z_1, y_{i^1+2}, \cdots, y_k)\ (z_1\text{ satisfies }(y_{i^1-1}, y_{i^1}, z_1, y_{i^1+2})\in \scS),\\
x^2&=(y_0, \cdots, y_{i^1}, z_1, y_{i^1+3}, \cdots, y_k),\\
y^3&=(y_0, \cdots, y_{i^1}, z_1, z_2, y_{i^1+3}, \cdots, y_k)\ (z_2\text{ satisfies }(y_{i^1}, z_1, z_2, y_{i^1+3})\in \scS).\\
\end{split}
\]
Let $(y_0^t, \cdots, y_k^t):=y^t$. We will show 
$d(y_{i^1}^t, y_{i^1+2}^t)\leq 1\ (t\geq 3)$ by induction.
\begin{itemize}
\item[$(\mathrm{I})$]{
In the case of $t=3$, 
for $y^3=(y_0, \cdots, y_{i^1}, z_1, z_2, y_{i^1+3}, \cdots, y_k)$, $(y_{i^1}, z_1, z_2, y_{i^1+3})\in \scS$ holds. Then either of
\begin{itemize}
\item[$\bullet$]{$z_2=y_{i^1}$}
\item[$\bullet$]{$z_2=p(y_{i^1}, z_1, y_{i^1+3})$}
\end{itemize}
is true, but in any case, $d(y_{i^1}^3, y_{i^1+2}^3)=d(y_{i^1}, z_2)\leq 1$ holds.
}
\item[$(\mathrm{II})$]{
For $y^t\ (t\geq 3)$, assume that $d(y_{i^1}^t, y_{i^1+2}^t)\leq 1$. Then, 
\[
y^{t+1}=(y_0^t, \cdots, y_{i^t-1}^t, y_{i^t}^t, z_t, y_{i^t+2}^t, \cdots, y_k^t)
 (z_t\text{ satisfies} (y_{i^t-1}^t, y_{i^t}^t, z_t, y_{i^t+2}^t)\in \scS).
\]
From $d(y_{i^1}^t, y_{i^1+2}^t)\leq 1$, we see $i^t\neq i^1$. Combining with $i^t\geq i^1$, 
we have $i^t\geq i^1+1$.
\begin{itemize}
\item[$(\mathrm{i})$]{
In the case of $i^t=i^1+1$, 
in the same way as $(\mathrm{I})$, by $(y_{i^t-1}^t, y_{i^t}^t, z_t, y_{i^t+2}^t)\in \scS$, 
$d(y_{i^1}^{t+1}, y_{i^1+2}^{t+1})=d(y_{i^t-1}^t, z_t)\leq 1$ is valid. 
}
\item[$(\mathrm{ii})$]{
In the case of $i^t\geq i^1+2$, 
$d(y_{i^1}^{t+1}, y_{i^1+2}^{t+1})=d(y_{i^1}^t, y_{i^1+2}^t)\leq 1.$
}
\end{itemize}
}
\end{itemize}
From $(\mathrm{I})$ and $(\mathrm{II})$, we obtain $d(y_{i^1}^t, y_{i^1+2}^t)\leq 1\ (t\geq 3)$. 
Since $y_{i^1+1}^t$ is not smooth point on $y^t$, $j^t=i^t\neq i^1=j^1\ (i\geq 3)$.
Combining with $j^2\neq j^1$, $j^t\neq j^1$ $(t\geq 2)$ holds.
On the other hand, from $y^1=y^{p+1}$, we have $j^1=i^1=i^{p+1}=j^{p+1}$, but it contradictis $j^t\neq j^1\ (t\geq 2)$.
Now we can show that $M$ is an acyclic matching. When considering a matching $M$ on the face poset of $K_\ell(a, b)$, then the critical elements are only elements of $A$ or elements of $K'_\ell(a, b)$. 
\begin{itemize}
\item{If $A=\emptyset$, then $K_\ell(a, b)$ is homotopy equivalent to $K'_\ell(a, b)$. Therefore $K_\ell(a, b)/K'_\ell(a, b)$ is contractible. }
\item{If $A\neq\emptyset$, then all elements of $A$ are $(\ell-2)$-simplexes since they satisfies $j_0=\infty$. By Theorem \ref{thm:morse}, $K_\ell(a, b)$ is homotopy equivalent to a CW complex with some $(\ell-2)$-cell attached to $K'_\ell(a, b)$. Therefore, $K_\ell(a, b)/K'_\ell(a, b)$ is homotopy equivalent to wedge of $(\ell-2)$-dim spheres.}
\end{itemize}
\end{proof}

\section{Generalization}
\label{sec:general}

In the previous section, we proved that 
the Asao-Izumihara complex is homotopy equivalent to a wedge of spheres 
for pawful graphs. In this section, we show 
that there are 
many non-pawful diagonal graphs with diameter $2$. 
Indeed, we introduce a slightly wider class of graphs 
which are diagonal. 
Note that the key fact in the proof of the main theorem 
(\S\ref{sec:proofs}) is the following property for a pawful graph $G$. 
\begin{itemize}
\item[$(\star)$] 
For any $(\alpha, \beta, \delta)\in V(G)^3$ with $d(\alpha, \beta)=1$ and $d(\beta, \delta)=2$, there exists $\gamma\in V(G)$ such that $d(\alpha, \gamma)\leq 1$ and $d(\beta, \gamma)=d(\gamma, \delta)=1$. 
\end{itemize}
For a non-pawful graph $G$, the property $(\star)$ does not hold in general. 
There may exist $(\alpha, \beta, \delta)\in V(G)^3$ with $d(\alpha, \beta)=1$ and $d(\beta, \delta)=2$ such that any vertex $\gamma\in V(G)$ with $d(\beta, \gamma)=d(\gamma, \delta)=1$ always satisfies $d(\alpha, \gamma)=2$.
In this section, we will see that even such cases, we can construct an acyclic matching $M$ if $G$ satisfies an extra condition.\\

Let $G$ be a graph such that $d(x, y)\leq 2$ for any $x, y\in V(G)$. 
Define $X, Y, X'$, and $Y'$ as follows. 
\[
\begin{split}
X&:=\{(a, b, c)\in V(G)^3\mid d(a, b)=d(b, c)=1,\ d(a, c)=2\}, \\
Y&:=\{(\alpha, \beta, \gamma, \delta)\in V(G)^4\mid d(\alpha, \beta)=d(\beta, \gamma)=d(\gamma, \delta)=1,\ d(\beta, \delta)=2\}, \\
X'&:=\{(a, c)\in V(G)^2\mid d(a, c)=2\}, \\
Y'&:=\{(\alpha, \beta, \delta)\in V(G)^3\mid d(\alpha, \beta)=1,\ d(\beta, \delta)=2\}.\\
\end{split}
\]

Then, there exist natural projections $\pi\colon X\rightarrow X'$ and $\pi\colon Y\rightarrow Y'$.

\begin{definition}
\label{def:S}
Assume that maps $f_1\colon X'\rightarrow X$ and $f_2\colon Y'\rightarrow Y$ satisfy the following conditions.
\begin{itemize}
\item[$(\mathrm{i})$]{
$\pi\circ f_1=\id_{X'}$, $\pi\circ f_2=\id_{Y'}$.
}
\item[$(\mathrm{ii})$]{
Let $(\alpha, \beta, \gamma, \delta)\in f_2(Y')$. Then $(\ast, \alpha, \beta, \gamma)\notin f_2(Y')$ and 
$(\alpha, \beta, \gamma)\notin f_1(X')$.}
\item[$(\mathrm{iii})$]{
Let $(\alpha, \beta, \gamma, \delta)\in f_2(Y')$. 
If $d(\alpha, \gamma)=2$, then there does not exist $\gamma'(\neq\gamma)\in V(G)$ such that 
$d(\beta, \gamma')=d(\gamma', \delta)=1$.
}
\end{itemize}
Define the set $\scS\subseteq X\sqcup Y$ by $\scS :=f_1(X')\sqcup f_2(Y')$.
\end{definition}

\begin{theorem}
\label{thm:generalization}
Suppose that $G$ has $\scS$ as in Definition \ref{def:S}. Then 
the Asao-Izumihara complex is homotopy equivalent to 
a wedge of spheres. 
In particular, $G$ is diagonal. 
\end{theorem}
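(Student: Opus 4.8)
To prove Theorem~\ref{thm:generalization}, the plan is to rerun the discrete Morse argument from the proof of Theorem~\ref{thm:main} almost word for word, feeding in the abstract set $\scS=f_1(X')\sqcup f_2(Y')$ of Definition~\ref{def:S} in place of the explicit $\scS$ built there for a pawful graph. That this really is a generalization is a quick check: for a pawful graph put $f_1(\alpha,\delta)=(\alpha,q(\alpha,\delta),\delta)$ and $f_2(\alpha,\beta,\delta)=(\alpha,\beta,\gamma,\delta)$ with $\gamma=\alpha$ if $d(\alpha,\delta)=1$ and $\gamma=p(\alpha,\delta,\beta)$ if $d(\alpha,\delta)=2$; then $d(\alpha,\gamma)\le 1$ always, so condition (iii) is vacuous, and (i), (ii) are exactly the facts used in \S\ref{sec:proofs}.

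So: fix $\ell\ge 3$ and $a,b\in V(G)$, assume $K_\ell(a,b)\ne\emptyset$, and let $P$ be the face poset of the simplices of $K_\ell(a,b)$ not lying in $K'_\ell(a,b)$, each written as its associated sequence $x=(x_0,\dots,x_k)$ with $x_0=a$, $x_k=b$. I would copy verbatim the definitions of $i(x)$ (least index at which an $\scS$-quadruple begins, with $i(x)=0$ reserved for an $\scS$-triple $(x_0,x_1,x_2)$), of $j(x)$ (least index of a distance-$2$ step), of the partition $P=A\sqcup P'\sqcup P''$ with $A=\{i=j=\infty\}$, $P'=\{i>j\}$, $P''=\{i<j\}$, and of $f\colon P'\to P''$ (insert, just after position $j(x)$, the vertex $z$ with $(x_{j(x)-1},x_{j(x)},z,x_{j(x)+1})\in\scS$, or $(x_0,z,x_1)\in\scS$ when $j(x)=0$) and $g\colon P''\to P'$ (delete $y_{i(y)+1}$). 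Then I would verify that $f$ and $g$ are mutually inverse, so that $M=\{(x,f(x)):x\in P'\}$ is a partial matching whose critical elements, once $M$ is viewed on the whole face poset of $K_\ell(a,b)$, are exactly the elements of $A$ (all of dimension $\ell-2$, since a member of $A$ has all consecutive distances $1$, hence sequence length $\ell$, hence $\ell-1$ vertices) together with the simplices of $K'_\ell(a,b)$. Here condition (i) ($\pi\circ f_i=\id$) gives $g\circ f=\id$ and $f\circ g=\id$ on the nose, and condition (ii) is precisely what makes the index bookkeeping go through --- it guarantees that inserting $z$ after position $j(x)$ creates no $\scS$-pattern at a smaller index, so $i(f(x))=j(x)$; this is the spot where the argument in \S\ref{sec:proofs} invoked the now-unavailable inequality $d(\alpha,\gamma)\le 1$.

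The heart of the matter, and the step I expect to be the main obstacle, is acyclicity. I would again suppose an $M$-cycle $y^1\succ x^1\prec y^2\succ\cdots\prec y^{p+1}=y^1$ with $y^{t+1}=f(x^t)$, normalize so $j^1:=j(x^1)$ is minimal, recover $j^t=i^t$, $j^{t+1}\le j^t+1$, $j^t\ne j^{t+1}$ and $j^2=j^1+1$, and try to propagate the bound on the distance between positions $i^1$ and $i^1+2$ in $y^t$, which forces $y^t_{i^1+1}$ to be non-smooth and hence $j^t\ne j^1$, contradicting the closing of the cycle. What is new is that the vertex $\gamma$ inserted into a $Y$-pattern may now have $d(\alpha,\gamma)=2$, which is exactly the situation condition (iii) addresses: in that case $\gamma$ is the \emph{unique} vertex bridging $\beta$ and $\delta$. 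The cycle can only ``move past'' such a $\gamma$ by deleting the middle vertex of the preceding $Y$-pattern, and --- this deletion being a covering relation in $P$ --- that already forces $d(\alpha,\gamma)=2$; so the next $Y$-pattern the matching inserts must use a bridge vertex different from the one just removed, and I expect condition (iii) to force that new bridge vertex to lie at distance $1$ from its predecessor (since the removed vertex survives as an alternative bridge, which (iii) forbids unless the distance is $1$), thereby reinstating the propagation bound one position to the left. Carrying this out carefully, tracking the ``active position'' as it drifts leftward but never below $j^1$, and deducing a repetition among the pairwise-distinct $y^t$ at the end, is the technical core.

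Finally, with $M$ an acyclic matching all of whose critical simplices are $(\ell-2)$-dimensional, Theorem~\ref{thm:morse} shows $K_\ell(a,b)$ is homotopy equivalent to $K'_\ell(a,b)$ with some $(\ell-2)$-cells attached, so $K_\ell(a,b)/K'_\ell(a,b)$ is contractible when $A=\emptyset$ and otherwise homotopy equivalent to a wedge of $(\ell-2)$-spheres. Since this holds for every $a,b\in V(G)$, Proposition~\ref{prop:wedge} yields that $G$ is diagonal.
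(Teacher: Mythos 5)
Your proposal is correct and follows essentially the same route as the paper, which itself only says ``Similar to Theorem \ref{thm:main}'' and, in Remark \ref{rem:proof}, notes that condition $(\mathrm{iii})$ is what rescues the key inequality $d(y_{i^1}^t, y_{i^1+2}^t)\leq 1$ in the acyclicity argument. You correctly identify both places where the pawful-graph fact $d(\alpha,\gamma)\le 1$ was used --- condition $(\mathrm{ii})$ replacing it in the verification that $g\circ f=\id$, and condition $(\mathrm{iii})$ replacing it in the propagation step --- so your outline matches the intended proof.
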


\begin{proof}
Similar to  Theorem \ref{thm:main}. 
\end{proof}

\begin{remark}
\label{rem:proof}
As was noted, the condition ($\mathrm{iii}$) in Definition \ref{def:S} is a 
new condition. 
Let us point out at which point the new condition is used in 
the proof of Theorem \ref{thm:generalization}. 
The construction of the matching $M$ by using the set $\scS$ is similarly to 
the proof of Theorem \ref{thm:main}. The crucial point is the inequality 
$d(y_{i^1}^t, y_{i^1+2}^t)\leq 1\ (t\geq 3)$ in the proof of Theorem \ref{thm:main}. 
In the proof of Theorem \ref{thm:generalization}, this inequality 
is obtained thanks to the condition ($\mathrm{iii}$). 
\end{remark}

\begin{example}
\label{ex:01}
Let $G_1$ be a graph as in Figure \ref{fig:non-pawful}. 
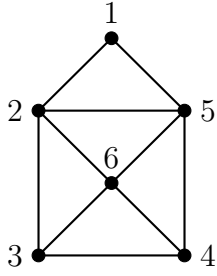
\begin{figure}[htbp]
\centering
\begin{tikzpicture}[scale=1.2]

\filldraw[fill=black, draw=black] (0.8, 2.4) circle (2pt) ;
\filldraw[fill=black, draw=black] (0, 1.6) circle (2pt) ;
\filldraw[fill=black, draw=black] (0, 0) circle (2pt) ;
\filldraw[fill=black, draw=black] (1.6, 0) circle (2pt) ;
\filldraw[fill=black, draw=black] (1.6, 1.6) circle (2pt) ;
\filldraw[fill=black, draw=black] (0.8, 0.8) circle (2pt) ;

\draw[thick] (0.8, 2.4)--(0, 1.6);
\draw[thick] (0, 1.6)--(0, 0);
\draw[thick] (0, 0)--(1.6, 0);
\draw[thick] (1.6, 0)--(1.6, 1.6);
\draw[thick] (1.6, 1.6)--(0.8, 2.4);
\draw[thick] (0, 1.6)--(1.6, 1.6);
\draw[thick] (0, 1.6)--(0.8, 0.8);
\draw[thick] (0, 0)--(0.8, 0.8);
\draw[thick] (1.6, 0)--(0.8, 0.8);
\draw[thick] (1.6, 1.6)--(0.8, 0.8);

\draw (0.8, 2.45) node[above]{$1$}; 
\draw (-0.05, 1.6) node[left]{$2$}; 
\draw (-0.05, 0) node[left]{$3$}; 
\draw (1.65, 0) node[right]{$4$}; 
\draw (1.65, 1.6) node[right]{$5$}; 
\draw (0.8, 0.85) node[above]{$6$}; 

\end{tikzpicture}
\caption{Non-pawful diagonal graph $G_1$.}
\label{fig:non-pawful}
\end{figure}
Note that $G_1$ is not pawful, because for vertices $1, 3, 4$ with $d(1, 3)=d(1, 4)=2$ and $d(3, 4)=1$ there does not exist a vertix $a$ such that $d(a, 1)=d(a, 3)=d(a, 4)=1$. For this graph, we can construct the set $S$ as follows. Define the map $f_1\colon X'\rightarrow X$ by
\[
f_1(X')=\left\{
\begin{array}{l}
(1, 2, 3), (1, 5, 4), (1, 2, 6), (2, 6, 4), (3, 2, 1), (3, 6, 5), \\
(4, 5, 1), (4, 6, 2), (5, 6, 3), (6, 2, 1)
\end{array}
\right\}.
\]
Define the map $f_2\colon Y'\rightarrow Y$ by
\[
f_2(Y')=\left\{
\begin{array}{l}
(2, 1, 2, 3), (5, 1, 2, 3), (2, 1, 5, 4), (5, 1, 5, 4), (2, 1, 2, 6), (5, 1, 2, 6), \\
(1, 2, 5, 4), (3, 2, 6, 4), (5, 2, 6, 4), (6, 2, 6, 4), \\
(2, 3, 2, 1), (4, 3, 2, 1), (6, 3, 2, 1), (2, 3, 6, 5), (4, 3, 6, 5), (6, 3, 6, 5), \\
(3, 4, 5, 1), (5, 4, 5, 1), (6, 4, 5, 1), (3, 4, 6, 2), (5, 4, 6, 2), (6, 4, 6, 2), \\
(1, 5, 2, 3), (2, 5, 6, 3), (4, 5, 6, 3), (6, 5, 6, 3), \\
(2, 6, 2, 1), (3, 6, 2, 1), (4, 6, 5, 1), (5, 6, 5, 1)
\end{array}
\right\}.
\]
Then, since we have 
\[
\{(\alpha, \beta, \gamma, \delta)\in f_2(Y')\mid d(\alpha, \gamma)=2\}
=\{
(4, 3, 2, 1), (3, 4, 5, 1)
\}, 
\]
it is easily seen that $f_1$ and $f_2$ satisfy the conditions $(\mathrm{i})$, $(\mathrm{ii})$ and $(\mathrm{iii})$ of Definition \ref{def:S}. 
Thus we conclude that $G_1$ is diagonal. 
The magnitude of $G_1$ is directly computed as 
\[
\begin{split}
\#G_1&=\frac{2q^3+4q^2-10q-6}{q^5+q^4-6q^2-5q-1}\\
&=6-20q+60q^2-182q^3+556q^4-1702q^5+5214q^6-15980q^7+\cdots. 
\end{split}
\]
From the diagonality, we have 
\[
\MH_0^0(G_1)\cong\Z^6,\ \MH_1^1(G_1)\cong\Z^{20}, \ \MH_3^3(G_1)\cong\Z^{60}, \cdots, 
\]
and others are vanishing.

\end{example}

\begin{example}
\label{ex:02} 
Let $G_2$ be a graph as in Figure \ref{fig:another}. 
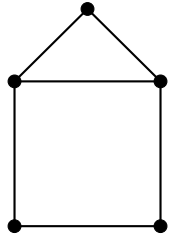
\begin{figure}[htbp]
\centering
\begin{tikzpicture}[scale=1.2]

\filldraw[fill=black, draw=black] (0.8, 2.4) circle (2pt) ;
\filldraw[fill=black, draw=black] (0, 1.6) circle (2pt) ;
\filldraw[fill=black, draw=black] (0, 0) circle (2pt) ;
\filldraw[fill=black, draw=black] (1.6, 0) circle (2pt) ;
\filldraw[fill=black, draw=black] (1.6, 1.6) circle (2pt) ;

\draw[thick] (0.8, 2.4)--(0, 1.6);
\draw[thick] (0, 1.6)--(0, 0);
\draw[thick] (0, 0)--(1.6, 0);
\draw[thick] (1.6, 0)--(1.6, 1.6);
\draw[thick] (1.6, 1.6)--(0.8, 2.4);
\draw[thick] (0, 1.6)--(1.6, 1.6);

\end{tikzpicture}
\caption{Diagonal graph $G_2$ which does not have $\scS$.}
\label{fig:another}
\end{figure}
Note that $G_2$ is not pawful, and 
diagonal graph (this fact can be checked by using 
Mayer-Vietoris sequence in \cite{HW}. See also the table below). 
However, there does not exist $\scS$ as in Definition \ref{def:S}. 
The rank of $\MH_k^\ell(G_2)$ is as follows. 
\[
\begin{array}{c|c|c|c|c|c|c|c}
\ell\backslash k&0&1&2&3&4&5&6\\
\hline
0&5&&&&&&\\
\hline
1&&12&&&&&\\
\hline
2&&&22&&&&\\
\hline
3&&&&38&&&\\
\hline
4&&&&&66&&\\
\hline
5&&&&&&118&\\
\hline
6&&&&&&&218
\end{array}
\]
\end{example}

\begin{example}
\label{ex:03} 
Let $G_3$ be a graph as in Figure \ref{fig:non-diag}. 
\begin{figure}[htbp]
\centering
\begin{tikzpicture}[scale=1.2]

\filldraw[fill=black, draw=black] (0.8, 2.4) circle (2pt) ;
\filldraw[fill=black, draw=black] (0, 1.6) circle (2pt) ;
\filldraw[fill=black, draw=black] (0, 0) circle (2pt) ;
\filldraw[fill=black, draw=black] (1.6, 0) circle (2pt) ;
\filldraw[fill=black, draw=black] (1.6, 1.6) circle (2pt) ;
\filldraw[fill=black, draw=black] (0.8, 0.8) circle (2pt) ;

\draw[thick] (0.8, 2.4)--(0, 1.6);
\draw[thick] (0, 1.6)--(0, 0);
\draw[thick] (0, 0)--(1.6, 0);
\draw[thick] (1.6, 0)--(1.6, 1.6);
\draw[thick] (1.6, 1.6)--(0.8, 2.4);
\draw[thick] (0.8, 2.4)--(0.8, 0.8);
\draw[thick] (0, 0)--(0.8, 0.8);
\draw[thick] (1.6, 0)--(0.8, 0.8);


\end{tikzpicture}
\caption{Non-diagonal graph $G_3$ such that each edge is contained in a cycle 
of length $\leq 4$.}
\label{fig:non-diag}
\end{figure}
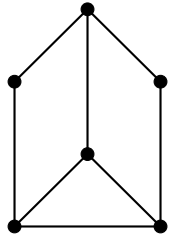
Note that each edge is 
contained in a cycle of length $\leq 4$. However, it is not diagonal. 
The rank of $\MH_k^\ell(G_3)$ is as follows. 
\[
\begin{array}{c|c|c|c|c|c|c|c}
\ell\backslash k&0&1&2&3&4&5&6\\
\hline
0&6&&&&&&\\
\hline
1&&16&&&&&\\
\hline
2&&&30&&&&\\
\hline
3&&&2&50&&&\\
\hline
4&&&&10&82&&\\
\hline
5&&&&&28&138&\\
\hline
6&&&&&2&60&242
\end{array}
\]
\end{example}

\medskip

\noindent
{\bf Acknowledgements.} 
Masahiko Yoshinaga 
was partially supported by JSPS KAKENHI 
Grant Numbers JP19K21826, JP18H01115. 
We thank the referee(s) for their lots of comments 
to improve the paper. 

\medskip

\end{document}